\documentclass[10pt]{amsart}

\usepackage{
amsfonts,
latexsym,
amssymb,
amsmath,
amsthm,
enumerate,
verbatim,
mathrsfs,
color,
}

\usepackage{url}

\newcommand{\labbel}[1]{\label{#1} [[{\bf #1}]]}  %
\renewcommand{\labbel}{\label}

 \definecolor{reed}{RGB}{0,0,100}

\newcommand{\red}[1]{{#1}}

\urlstyle{sf}
 
\newtheorem{theorem}{Theorem}[section]
\newtheorem{lemma}[theorem]{Lemma}

\newtheorem{proposition}[theorem]{Proposition} 
 
\newtheorem{corollary}[theorem]{Corollary}

\newtheorem*{theorem*}{Theorem}
\newtheorem*{corollary*}{Corollary}

\theoremstyle{definition}
\newtheorem{definition}[theorem]{Definition}

\newtheorem{problem}[theorem]{Problem}

\theoremstyle{remark}
\newtheorem{remark}[theorem]{Remark}

\numberwithin{equation}{section}

\newcommand{\hyphe}{\frac{\ }{\ }}

 \allowdisplaybreaks[0]
\predisplaypenalty=0

\begin{document}

\title[An $n \frac{1}{2} $-ary near-unanimity term]
{Between an $n$-ary and an $n{+}1$-ary\\near-unanimity term}

\author{Paolo Lipparini} 
\address{Dipartimento 
 di Matematica\\Viale della  Ricerca  Intermedia
\\Universit\`a di Roma ``Tor Vergata'' 
\\I-00133 ROME ITALY}
\urladdr{http://www.mat.uniroma2.it/\textasciitilde lipparin}

\keywords{near-unanimity term,
 J{\'o}nsson terms,  Day terms,
congruence distributive variety,
congruence modular variety,
congruence identity}

\subjclass[2020]{Primary 08B10; Secondary 08B05, 06B75, 06E75}
\thanks{Work performed under the auspices of G.N.S.A.G.A. Work 
partially supported by PRIN 2012 ``Logica, Modelli e Insiemi''.
The author acknowledges the MIUR Department Project awarded to the
Department of Mathematics, University of Rome Tor Vergata, CUP
E83C18000100006.}

\begin{abstract}
We devise a condition strictly between the existence of 
an $n$-ary and an $n{+}1$-ary near-unanimity term.
We evaluate exactly the distributivity and modularity levels 
implied by such a condition.
\end{abstract}

\maketitle

\section{Introduction} \labbel{intro}

Varieties with  a near-unanimity term 
form a distinguished class of congruence distributive varieties
and have been studied from the 70's in the past century
\cite{BP,Mi}.
More recently, a fundamental paper by
Berman,  Idziak,  Markovi\'c,  McKenzie, Valeriote, Willard 
\cite{BIM} 
showed that, among congruence distributive 
varieties,  near-unanimity terms play a very important role
in tractability problems \cite{IMM}. 
Recent results about near-unanimity terms
include 
\red{\cite{Bar,CCV,Dal,DHM,Mar,MZ,S,Z}.}

A majority term is the simplest
ternary case of a near-unanimity term
and corresponds exactly to $2$-distributivity,
the first and strongest nontrivial level in the    
J{\'o}nsson hierarchy. 
Levels in this sense \cite{FV}  are measured 
by the minimal number of J{\'o}nsson terms
witnessing congruence distributivity, as recalled in Theorem \ref{cdd}(2)
below.
For near-unanimity terms of larger arity
the conditions overlap no more
and near-unanimity provides a condition
strictly stronger than congruence distributivity
\cite{Mi}.
For $n \geq 3$, the existence of an 
 $n$-ary near-unanimity term,
for short, an \emph{$n$-near-unanimity term},  implies
$2n{-}4$-distributivity
\cite[Theorem 2]{Mi}.  
In \cite[Theorem 3.6]{mis} we showed that the above result
is sharp, even when restricted to  locally finite varieties 
with a symmetric near-unanimity term.

Since, by the above results,
the existence of an 
 $n{+}1$-near-unanimity term implies
$2n{-}2$-distributivity,
the results suggest that there
might possibly  be a condition   strictly between  
an 
 $n$-
and an 
 $n{+}1$-near-unanimity term
and which
implies 
$2n{-}3$-distributivity.
Henceforth, an
 ``$n\frac{1}{2} $-near-unanimity term'' 
is a suitable name for such a condition.
A candidate for such a condition
has been proposed
in \cite[Definition 4.7]{mis},
involving an $n{+}2$-ary 
term.
In Section \ref{bsec}  we show
that 
the condition proposed in \cite{mis} does
satisfy the required properties, hence actually deserves the name 
of an \emph{$n\frac{1}{2} $-near-unanimity term}.
More involved arguments 
in Section \ref{dl} 
show that the result about the distributivity
level is optimal, namely, that 
an $n\frac{1}{2} $-near-unanimity term
  does not necessarily imply
$2n{-}4$-distributivity.
Corresponding results are proved for modularity levels
in Section \ref{mod}.

Under the above terminological conventions,
a compact way to express the main results of the present note 
goes as follows,
where parameters vary on integers and half-integers.

\begin{theorem} \labbel{tut}
Suppose that $h,k \in \mathbb N \cup (\mathbb N + \frac{1}{2}) $,
$h,k \geq 3$.  
  \begin{enumerate}   
 \item  
Suppose that  $h <k$. Then every variety with an $h$-near-unanimity term   
has a $k$-near-unanimity term. Moreover, there is 
a locally finite variety    with a $k$-near-unanimity term   
without an $h$-near-unanimity term.
\item
Every variety with an $h$-near-unanimity term   
is $2h{-}4$-distributive. There is 
a locally finite variety    with an $h$-near-unanimity term   
which is not $2h{-}5$-distributive.
\item
Every variety with an $h$-near-unanimity term   
is $2h{-}3$-modular. If $h \geq 4$ there is 
a locally finite variety    with an $h$-near-unanimity term   
which is not $2h{-}4$-modular.
 \end{enumerate} 
 \end{theorem}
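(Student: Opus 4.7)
The plan is to read the theorem as a summary of the three pairs of results developed in Sections~\ref{bsec}, \ref{dl} and~\ref{mod}, and to prove it by establishing, for each of the three items, first the ``every variety'' positive implication and then the ``there is a locally finite variety'' separation.

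For the positive implications, the integer case of~(2) is Mitschke's theorem \cite[Theorem~2]{Mi}, and the integer case of~(3) follows from it by the standard passage from J{\'o}nsson to Day terms. The genuinely new content is the half-integer case, for which I would read off the defining identities of an $n\frac{1}{2}$-near-unanimity term from the $(n{+}2)$-ary candidate recalled in Section~\ref{bsec} and exhibit, by explicit substitutions, a chain of $2n{-}3$ J{\'o}nsson terms and one of $2n{-}2$ Day terms; this is the business of Section~\ref{bsec}. The positive direction of~(1) reduces, for integer $h<k$, to the trivial observation that any near-unanimity term extends to one of larger arity by addition of a dummy variable, and in the mixed cases to verifying two direct implications: that an $n$-near-unanimity term satisfies the identities defining an $n\frac{1}{2}$-near-unanimity term, and that, by identification of an appropriate pair of variables, an $n\frac{1}{2}$-near-unanimity term yields an $(n{+}1)$-near-unanimity term.

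For the separation examples, the integer cases of~(1) and~(2) are already available from~\cite{mis}, so I would simply invoke them. The new separations arise at half-integer parameters, and the strategy is to adapt the locally finite algebras of~\cite{mis} so that their clones are rich enough to realise the specific $(n{+}2)$-ary term of Section~\ref{bsec} but too rigid to realise any $n$-ary near-unanimity operation; the congruence lattice of a small power is then examined to exhibit a configuration witnessing the failure of $2n{-}5$-distributivity, respectively of $2n{-}4$-modularity. The absence of a separating example at $h=3$ in~(3) is not a gap but a reflection of the half-integer range $h\geq 3\tfrac{1}{2}$ at which the argument engages.

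The main obstacle lies precisely in these half-integer separations: one must simultaneously \emph{force} a specific $(n{+}2)$-ary term identity to hold, \emph{forbid} every $n$-ary near-unanimity operation in the whole generated variety, and \emph{forbid} the slightly shorter J{\'o}nsson or Day chain, all while staying inside a locally finite variety. Keeping these three compatibility constraints simultaneously satisfied is the combinatorial heart of the paper and is what justifies the more elaborate constructions of Sections~\ref{dl} and~\ref{mod}.
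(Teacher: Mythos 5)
Your outline correctly reproduces the paper's strategy: the positive directions come from Mitschke's theorem and Sequeira's $2n{-}3$-modularity bound for integer $h$, and from explicit J\'onsson and Day chains read off the defining identities \eqref{b1}--\eqref{b3} for half-integer $h$ (Propositions \ref{ipp} and \ref{ippm}); the separations come from the locally finite varieties of \cite{mis} for integer $h$ and from the variety $\mathcal{N}_{n\frac{1}{2}}$ built by combining a Boolean term reduct with lattice term reducts for half-integer $h$ (Theorem \ref{thd} and Section \ref{mod}). One small imprecision: the integer case of the modularity bound is not obtained by a generic ``passage from J\'onsson to Day terms'' applied to Mitschke's $2n{-}4$-distributivity (such a conversion would give a weaker bound, and indeed $2n{-}4$-modularity can fail); the paper instead cites Sequeira's independent result \cite[Theorem~3.19]{S}.
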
 

Details for the proof of Theorem \ref{tut}(1) shall be given 
at the end of Section \ref{bsec}. 
The remaining items shall be proved
in Section \ref{mod}.

A few variations, still 
between  
an 
 $n$-
and an 
 $n{+}1$-near-unanimity term,
are presented in Section \ref{var}. 
Section \ref{prob} presents some problems.

\section{Preliminaries} \labbel{prel}

Throughout, $n$ is a natural number $ \geq 2$,
frequently, $\geq 3$.  

A
\emph{near-unanimity term}  
is a term $u$ of arity $\geq 3$ and  such that    all the equations of the form
\begin{equation*} 
u(x,x, \dots, x,y,x, \dots, x,  x) =x
\end{equation*}    
are satisfied (in some given algebra or variety),
 with just one occurrence of $y$
in any possible position. 
For notational convenience, an $n$-ary near-unanimity 
term shall be sometimes called 
an  \emph{$n$-near-unanimity term}. 

An $n$-ary term $u$ is 
\emph{symmetric} if  the equations 
\begin{equation*}
    u(x_1, \dots, x_n)=
 u(x_{ \tau  (1)}, \dots, x_ { \tau  (n)})
  \end{equation*}
hold, for all permutations $ \tau $ of
$\{ 1, \dots, n\}$. 

The following theorem 
provides  important  Maltsev conditions
characterizing congruence distributivity.

\begin{theorem} \labbel{cdd}
For every variety $\mathcal V$, the following conditions are equivalent. \begin{enumerate} 
  \item   
$\mathcal V$ is congruence distributive.
\item
(J{\'o}nsson \cite{JD}) For some natural number $k$,
$\mathcal V$ has a sequence of \emph{J{\'o}nsson terms},
that is,  terms 
$t_0, \dots, t_k$ satisfying 
\begin{align} \labbel{j1} 
  x&= t_0(x,y,z),  &  \\
\labbel{j2}
  x&=t_i(x,y, x), 
\quad \ \ \  \text{ for } 
0 \leq i \leq k,
 \\ 
 \labbel{j3}  
\begin{split}  
 t_{i}(x,x,z) &=
t_{i+1}(x,x,z),
 \quad \text{ for even $i$,\ } 
0 \leq i < k,
  \\ 
 t_{i}(x,z,z)&=
t_{i+1}(x,z,z),
  \quad \text{ for odd $i$,\ } 
0 \leq i < k,
 \end{split}
 \\ 
\labbel{j4}
t_{k}(x,y,z)&=z.  &
\end{align}   
\item
(Kazda, Kozik,  McKenzie,  Moore \cite{KKMM}) 
For some natural number $k$,
$\mathcal V$ has a sequence of
  \emph{directed J{\'o}nsson terms},
that is, terms satisfying 
\eqref{j1}, \eqref{j2}, \eqref{j4} and   
\begin{equation}\labbel{D}
  t_{i}(x,z,z)=
t_{i+1}(x,x,z)  \quad \text{ for \ } 
0 \leq i < k.
  \end{equation}    
\end{enumerate}  
 \end{theorem}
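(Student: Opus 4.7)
The plan is to establish the three-way equivalence via three elementary chain arguments plus one genuinely deep implication. The easy half consists of (2)$\Rightarrow$(1) and (3)$\Rightarrow$(1), which proceed by exhibiting, for any $(x,z) \in \alpha \cap (\beta \vee \gamma)$ in an algebra of $\mathcal{V}$, a chain witnessing $(x,z) \in (\alpha \cap \beta) \vee (\alpha \cap \gamma)$. After reducing to the base case $x \mathrel{\beta} y \mathrel{\gamma} z$, one sets $u_i := t_i(x,y,z)$ and uses \eqref{j2} together with $x \mathrel{\alpha} z$ to get $u_i \mathrel{\alpha} x$ for every $i$. In case (2), the identities \eqref{j3} give $u_i \mathrel{\beta} u_{i+1}$ for even $i$ and $u_i \mathrel{\gamma} u_{i+1}$ for odd $i$. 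In case (3), \eqref{D} yields $u_i \mathrel{\gamma} t_i(x,z,z) = t_{i+1}(x,x,z) \mathrel{\beta} u_{i+1}$, with the intermediate point also $\alpha$-related to $x$ because $t_i(x,x,x) = x$ follows from \eqref{j2}. Either way, the resulting chain lies in $(\alpha \cap \beta) \vee (\alpha \cap \gamma)$, as required.

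For (1)$\Rightarrow$(2) I would work inside the free algebra $F = F_{\mathcal{V}}(x,y,z)$ on three generators. Set $\theta_1 = \mathrm{Cg}_F(x,y)$, $\theta_2 = \mathrm{Cg}_F(y,z)$, and $\theta_3 = \mathrm{Cg}_F(x,z)$. Then $(x,z) \in \theta_3 \cap (\theta_1 \circ \theta_2)$, which by (1) is contained in $(\theta_3 \cap \theta_1) \vee (\theta_3 \cap \theta_2) \subseteq \theta_3$. One thereby obtains a chain $x = w_0, w_1, \dots, w_k = z$ in $F$ whose consecutive pairs lie alternately in $\theta_3 \cap \theta_1$ and $\theta_3 \cap \theta_2$, with each $w_i \mathrel{\theta_3} x$. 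Writing $w_i = t_i(x,y,z)$ and identifying $\theta_1, \theta_2, \theta_3$ with the kernels of the three natural surjections $F \to F_{\mathcal{V}}(2)$ that send $y \mapsto x$, $y \mapsto z$, and $z \mapsto x$ respectively, one reads off precisely the equations \eqref{j1}--\eqref{j4}.

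The remaining direction (1)$\Rightarrow$(3) is, in my view, the substantive part and would be imported rather than re-derived: it is the theorem of Kazda, Kozik, McKenzie, and Moore quoted as \cite{KKMM}. The main obstacle of the whole argument sits here, since its proof relies on absorption-theoretic analysis of compatible relations in congruence distributive varieties and does not reduce to the elementary term manipulations used in the other three implications. The chain and free-algebra arguments sketched above are essentially formal; only (1)$\Rightarrow$(3) requires deep input beyond Maltsev-condition calculus.
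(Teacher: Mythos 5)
Your proposal is mathematically sound, but note that the paper itself does not prove Theorem~\ref{cdd}: it is a preliminaries statement attributed to J{\'o}nsson \cite{JD} and to Kazda--Kozik--McKenzie--Moore \cite{KKMM}, with no proof given in the text. Your sketch fills in the standard arguments correctly. The chain computations for (2)$\Rightarrow$(1) and (3)$\Rightarrow$(1) are right, including the observation that in the directed case the intermediate element $t_i(x,z,z)=t_{i+1}(x,x,z)$ is $\alpha$-related to $x$ via $t_i(x,x,x)=x$; you should also note, as the usual proof does, that one must pass from $\alpha(\beta\circ\gamma)\subseteq(\alpha\beta)\vee(\alpha\gamma)$ to full distributivity by the routine induction on $\alpha(\beta\vee\gamma)=\alpha\bigl(\bigcup_m(\beta\circ\gamma\circ\cdots)\bigr)$. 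The free-algebra derivation of (1)$\Rightarrow$(2) is J{\'o}nsson's original argument, correctly reproduced (one needs the small bookkeeping point that the $\theta_3\theta_1/\theta_3\theta_2$ chain can be padded so it starts with $\theta_3\theta_1$ and strictly alternates, which is routine). Deferring (1)$\Rightarrow$(3) to \cite{KKMM} is exactly the right call: that implication is a genuine theorem and is not a consequence of formal Maltsev-condition manipulation. One small alternative worth mentioning: (3)$\Rightarrow$(2) admits a purely syntactic derivation by interleaving $d_i(x,y,z)$ with $d_i(x,z,z)=d_{i+1}(x,x,z)$, so one could close the equivalence with (1)$\Leftrightarrow$(2), (2)$\Rightarrow$(3) from \cite{KKMM}, and the syntactic (3)$\Rightarrow$(2), avoiding a second appeal to (1).
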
 

To the best of our knowledge,
 directed J{\'o}nsson terms first appeared 
\red{implicitly in the proof of 
\cite[Theorem 2.3]{Mck} and explicitly
(but unnamed)
in \cite[Theorem 4.1]{Z}.} 

\begin{definition} \labbel{ndist}    
A variety $\mathcal V$ is said to be 
\emph{$k$-distributive} if $\mathcal V$
has a sequence  
$t_0, \dots, t_k$ of J{\'o}nsson terms.
It is standard to see that a variety $\mathcal V$ 
is $k$-distributive if and only if
$\mathcal V$ satisfies the  congruence identity 
$ \alpha ( \beta \circ \gamma )
 \subseteq  
\alpha \beta \circ \alpha \gamma \circ {\stackrel{k}{\dots}}$. 
In this and similar identities 
juxtaposition
denotes intersection
and 
$ {\stackrel{k}{\dots}}$ 
means that we are considering 
 $k$ factors, namely, $k-1$ occurrences of  $ \circ $.
If, say, $k$ is even, then we might  write
$\alpha \beta \circ \alpha \gamma \circ
 {\stackrel{k}{\dots}} \circ \alpha \gamma $
when we want to make clear that $ \alpha \gamma $
 is the last factor. \red{Notice that an inclusion of the form
$A \subseteq B$ is equivalent to the identity $AB=A$,
hence we can always use the expression ``identity''.} 
\end{definition}

\begin{theorem} \labbel{mit}
(Mitschke \cite[Theorem 2]{Mi}) A variety 
with a near-unanimity term is congruence distributive.

In more detail, for $n \geq 3$,  a variety with an
$n$-ary near-unanimity term is $2n{-}4$-distributive  \cite{Mi}
and has a sequence $t_0, \dots, t_{n-1}$
of directed J{\'o}nsson terms \cite[Section 5.3.1]{BK}.
 \end{theorem}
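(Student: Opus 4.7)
Both assertions are proved by constructing explicit term sequences from the given $n$-ary near-unanimity term $u$; the first sentence then follows from either sequence via Theorem \ref{cdd}.

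For the $2n-4$-distributivity part, I would produce J\'onsson terms $t_0, \dots, t_{2n-4}$ by a sliding schedule. Set $t_0(x,y,z) = x$ and $t_{2n-4}(x,y,z) = z$, and for $1 \leq k \leq n-2$ let
\begin{equation*}
  t_{2k-1}(x,y,z) \;=\; u\bigl(\underbrace{z,\dots,z}_{k},\; y,\; \underbrace{x,\dots,x}_{\,n-k-1}\bigr),
\end{equation*}
while for $1 \leq k \leq n-3$ let
\begin{equation*}
  t_{2k}(x,y,z) \;=\; u\bigl(\underbrace{z,\dots,z}_{k+1},\; \underbrace{x,\dots,x}_{\,n-k-1}\bigr).
\end{equation*}
In an odd-indexed term the variable $y$ occupies the ``boundary'' slot between the $z$-block and the $x$-block; in an even-indexed term that slot is filled by $z$. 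Each identity $t_i(x,y,x) = x$ reduces to a single application of the defining property of $u$, because after the substitution $z=x$ the $u$-tuple contains at most one non-$x$ entry. The J\'onsson compatibility identities are literal coordinatewise equalities of the corresponding $u$-tuples: for odd $i = 2k-1$ the substitution $y=z$ turns $t_{2k-1}(x,z,z)$ into $t_{2k}(x,z,z)$, and for even $i = 2k$ the substitution $y=x$ turns $t_{2k+1}(x,x,z)$ into $t_{2k}(x,x,z)$ after absorbing the $y$-slot into the adjacent $x$-block. The total term count $1 + (n-2) + (n-3) + 1 = 2n-3$ witnesses $2n-4$-distributivity.

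For the directed-J\'onsson sequence the plan is the shorter, non-oscillating template
\begin{equation*}
  t_j(x,y,z) \;=\; u\bigl(\underbrace{z,\dots,z}_{\,j-1},\; y,\; \underbrace{x,\dots,x}_{\,n-j}\bigr),
\end{equation*}
defined for $j$ in a suitable range, augmented with literal boundary terms $x$ and $z$. The directed identity $t_j(x,z,z) = t_{j+1}(x,x,z)$ is then literal, since both sides reduce to $u$ applied to $j$ copies of $z$ followed by $n-j$ copies of $x$; and $t_j(x,y,x) = x$ is immediate by near-unanimity. The main obstacle has two faces. First, achieving the sharp length claimed in the statement requires collapsing the extremal terms of the template, where near-unanimity already forces the value $x$ or $z$, into the literal boundary terms, and verifying that the remaining chain is unbroken. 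Second, for a non-symmetric $u$ the bare template does not interact well with the prescribed coordinate positions, and one must replace the inner $u$ by a suitable nesting of $u$'s that absorbs the asymmetry. The argument in \cite[Section 5.3.1]{BK} carries out both steps uniformly, which is why I would ultimately defer to that reference for the precise count.
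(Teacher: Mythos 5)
The paper does not prove Theorem \ref{mit}; it is stated as a recall of known results and the proof is delegated to the citations \cite{Mi} and \cite[Section 5.3.1]{BK}. So the only basis for comparison is the correctness of your construction, and your first construction is indeed correct: with $t_0=x$, $t_{2n-4}=z$, $t_{2k-1}=u(z^k,y,x^{n-k-1})$, and $t_{2k}=u(z^{k+1},x^{n-k-1})$, the identities $t_i(x,y,x)=x$ follow from near-unanimity (at most one non-$x$ slot after substituting $z:=x$), the odd links $t_{2k-1}(x,z,z)=t_{2k}(x,z,z)$ are literal (both sides are $u(z^{k+1},x^{n-k-1})$), and the even links $t_{2k}(x,x,z)=t_{2k+1}(x,x,z)$ are likewise literal after absorbing the $y$-slot; the two boundary links use $u(z,x^{n-1})=x$ and $u(z^{n-1},x)=z$. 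This is essentially Mitschke's original argument.

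For the directed J\'onsson part, the template $t_j(x,y,z)=u(z^{j-1},y,x^{n-j})$ is the right one, and your worry about asymmetry of $u$ is misplaced: the links $t_j(x,z,z)=t_{j+1}(x,x,z)$ are coordinatewise-literal equalities $u(z^j,x^{n-j})=u(z^j,x^{n-j})$, with no permutation of arguments anywhere, so no nesting is needed and symmetry plays no role. Concretely, $d_j=u(z^{j-1},y,x^{n-j})$ for $1\le j\le n$ gives a directed chain whose first term $d_1=u(y,x^{n-1})$ is the literal projection $x$ and whose last term $d_n=u(z^{n-1},y)$ is the literal projection $z$, with all the required identities holding by near-unanimity. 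The only genuine delicacy, which you do flag, is reconciling the length of this chain with the index bound $t_{n-2}$ stated in the theorem, since in the convention of the paper's own Theorem~\ref{cdd}(3) the boundary projections $t_0=x$ and $t_k=z$ are part of the sequence; that bookkeeping (and the exact counting convention used by \cite{BK}) is what you would need to spell out, not an asymmetry repair.
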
 

\red{Notice that the counting conventions
about the number of directed J{\'o}nsson terms are
not uniform through the literature
(not even in the works by the present author).} 

A direct proof 
(not relying on Theorem \ref{cdd})
that a variety 
with a near-unanimity term is congruence distributive
can be found in  
 \cite[Lemma 1.2.12]{KP}.
The proof is credited to 
E. Fried.
Compare also \cite[Section 5]{B}.

We have showed in \cite[Theorem 3.6 and Remark 4.5]{mis}
that  no parts of the second statement
in Theorem \ref{mit} can be improved.

\begin{theorem} \labbel{daythm}
\cite{D} For every variety $\mathcal V$, the following conditions are equivalent. \begin{enumerate}   
\item
$\mathcal V$ is congruence modular.
\item
For some natural number $k$,
  $\mathcal V$ has a  sequence 
of \emph{Day terms},
namely, $4$-ary terms $t_0, \dots, t_k$ satisfying 
\begin{align*}         
x &=t_i(x,y,y,x),  \quad \quad  \ \,    \text{ for  } 0 \leq i \leq k,  
\\
 x&=t_0(x,y,z,w), 
\\    
\begin{split}     
 t_i(x,x,w,w)&=t_{i+1}(x,x,w,w), \quad \text{ for $i$  even, }  0 \leq i < k, 
\\
 t_i(x,y,y,w)&=t_{i+1}(x,y,y,w), \quad \; \text{ for $i$  odd, }  0 \leq i <  k,
\end{split}
\\ 
 t_{k}(x,y,z,w)&=w.
\end{align*}
\end{enumerate}   
 \end{theorem}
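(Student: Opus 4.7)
The statement is Day's classical characterization \cite{D} of congruence modular varieties by a Mal'cev condition. My plan is to carry out the standard bidirectional argument, which I sketch below.

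For the direction (2)$\Rightarrow$(1), assume a sequence $t_0,\dots,t_k$ of Day terms. It suffices to verify the modular inequality $(\alpha \vee \beta) \cap \gamma \subseteq \alpha \vee (\beta \cap \gamma)$ whenever $\alpha \subseteq \gamma$, for $\alpha,\beta,\gamma \in \mathrm{Con}(A)$ and $A \in \mathcal V$. Given $(a,b)$ in the left-hand side, I would fix a chain $a = c_0, c_1, \dots, c_m = b$ witnessing $(a,b) \in \alpha \vee \beta$ with consecutive terms related alternately by $\alpha$ and $\beta$, and consider the rectangular array $e_{i,j} = t_i(a,c_j,c_{j+1},b)$ for $0 \leq i \leq k$ and $0 \leq j < m$. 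The boundary equations $t_0(x,y,z,w) = x$ and $t_k(x,y,z,w) = w$ make the top and bottom rows constant at $a$ and $b$ respectively; the equations $t_i(x,y,y,x) = x$, together with $\alpha \subseteq \gamma$ and $(a,b) \in \gamma$, then force each vertical jump $(e_{i,j}, e_{i+1,j})$ to lie in $\alpha$ or in $\beta \cap \gamma$, according to the parity of $i$ and to whether the edge $c_j \to c_{j+1}$ is of $\alpha$- or $\beta$-type. Concatenating the resulting zig-zag path produces the required chain from $a$ to $b$ witnessing $(a,b) \in \alpha \vee (\beta \cap \gamma)$.

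For the direction (1)$\Rightarrow$(2) I would invoke the standard free-algebra / Mal'cev-condition machinery. In the free algebra $F = F_{\mathcal V}(x,y,z,w)$, one chooses three congruences generated by appropriate pairs of the generators (chosen so that, for instance, the patterns $(x,x,w,w)$ and $(x,y,y,w)$ appear as the ``mixed'' steps of an $\alpha/\beta$-chain, while the pattern $(x,y,y,x)$ encodes the diagonal condition contained in $\gamma$). Modularity applied to these three congruences in $\mathrm{Con}(F)$ then places a distinguished pair into a join which, unpacked by Mal'cev's description of congruences generated by a set of pairs, witnesses the existence of a finite chain of $4$-ary terms. A reindexing of these terms, together with a straightforward check against the defining identities, yields the desired Day sequence.

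The main obstacle is the bookkeeping in the first direction: matching the parity of the Day identities with the alternating $\alpha/\beta$ pattern of the chain and verifying that the relevant consecutive pairs land in $\beta \cap \gamma$ and not merely in $\beta$. The converse direction is essentially formal once the generating pairs are selected consistently with the shape of the Day identities, and since the theorem is classical it can safely be invoked as a black box in later sections.
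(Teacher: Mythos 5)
The paper itself gives no proof of Theorem \ref{daythm}: it is stated as a classical reference result, credited to Day \cite{D}, and used later as a black box. So there is no in-paper argument to compare against; your proposal is evaluated on its own terms as a sketch of the classical proof.

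Your direction $(1)\Rightarrow(2)$ is the standard free-algebra/Mal'cev-condition argument and, though left vague, is the right shape. Your direction $(2)\Rightarrow(1)$, however, has a real gap: the rectangular array $e_{i,j}=t_i(a,c_j,c_{j+1},b)$ does not deliver what you claim. For $i$ even, the relevant Day identity is $t_i(x,x,w,w)=t_{i+1}(x,x,w,w)$; to apply it to the pattern $(a,c_j,c_{j+1},b)$ you would need $(a,c_j)$ and $(c_{j+1},b)$ to be controlled by a single congruence, but for a general alternating chain these pairs lie only in $\alpha\vee\beta$, not in $\alpha$ or $\beta$ separately. Moreover, to upgrade a $\beta$-jump to a $\beta\cap\gamma$-jump you need $e_{i,j}\mathrel{\gamma}e_{i+1,j}$; this hinges on $(c_j,c_{j+1})\in\gamma$, which via $\alpha\subseteq\gamma$ holds for $\alpha$-edges but fails for $\beta$-edges. (Your ``zig-zag'' path is also left unspecified, and it is unclear which walk through the grid would connect $a$ to $b$.) The classical argument avoids all this by working with a chain of length three: one proves the single inclusion $\alpha(\beta\circ\alpha\gamma\circ\beta)\subseteq\alpha\beta\circ\alpha\gamma\circ\cdots$ (the $k$-modularity identity recorded after Theorem \ref{daythm}) using the one-dimensional sequence $e_i=t_i(a,c,d,b)$ where $a\mathrel{\beta}c\mathrel{\alpha\gamma}d\mathrel{\beta}b$ and $(a,b)\in\alpha$, and then derives the full modular law from that congruence inclusion by a separate, purely lattice-theoretic iteration. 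Your sketch collapses these two stages into one grid argument and in doing so loses the control needed on the even-index steps.
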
 

A congruence modular variety $\mathcal V$ is \emph{$k$-modular} 
if $\mathcal V$ 
has a sequence  
$t_0, \dots, t_k$ of Day terms; this is equivalent
to the congruence identity 
$ \alpha ( \beta \circ \alpha \gamma  \circ \beta )
 \subseteq  
\alpha \beta \circ \alpha \gamma \circ {\stackrel{k}{\dots}}$.

\section{Between an $n$-ary and an $n{+}1$-ary 
near-unanimity term} \labbel{bsec} 

\begin{definition} \labbel{12}
\cite[Definition 4.7]{mis} 
If $n \geq 2$, an \emph{$n\frac{1}{2} $-near-unanimity term}
is an $n{+}2$-ary term $u $ such that the following equations hold.
\begin{align} 
\labbel{b1}   
&u(z,z, x, x, \dots, x) = x,
\\
\labbel{b2}
&u(x, \dots, x,\underset{i}{z},x, \dots, x) = x, && \text{for $2 \leq i \leq n+2$,} 
\\ \labbel{b3}
&u(x, x, x, z, z, \dots, z) = u(x, z, z, z, z, \dots, z).
 \end{align}   
 \end{definition}

As we mentioned in the introduction,
the terminology comes from the fact that 
 an $n\frac{1}{2} $-near-unanimity term
is a notion strictly between 
an $n$-near-unanimity term
and an $n+1$-near-unanimity term,
as we will show in Theorem \ref{ip}.
In order to simplify some parts in the proof of  
Theorem \ref{ip} we need the following
proposition of independent interest.

\begin{proposition} \labbel{ipp}
If $n \geq 3$, then every variety with an
$n\frac{1}{2} $-near-unanimity term
 is $2n{-}3$-distributive. 
 \end{proposition}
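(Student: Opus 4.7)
The plan is to construct an explicit J{\'o}nsson sequence of length $2n{-}3$ directly from the $n\frac{1}{2}$-near-unanimity term $u$. As a preliminary observation, the derived $(n{+}1)$-ary term
\[
v(x_1,\dots,x_{n+1}) := u(x_1,x_1,x_2,\dots,x_{n+1})
\]
is a genuine $(n{+}1)$-near-unanimity term: equation \eqref{b1} covers the case where the unique minority $z$ sits in $v$-position $1$, and equation \eqref{b2} covers the remaining positions. Mitschke's theorem (Theorem~\ref{mit}) applied to $v$ alone would yield only $2n{-}2$-distributivity, which is one J{\'o}nsson step too weak; the role of equation \eqref{b3} will be to save exactly that step.

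Concretely, I would set $t_0(x,y,z) := x$, $t_{2n-3}(x,y,z) := z$, and in between
\[
t_{2j-1}(x,y,z) := u(\underbrace{x,\dots,x}_{n+1-j},\, y,\, \underbrace{z,\dots,z}_{j}) \quad (1 \leq j \leq n-2),
\]
\[
t_{2j}(x,y,z) := u(\underbrace{x,\dots,x}_{n+1-j},\, \underbrace{z,\dots,z}_{j+1}) \quad (1 \leq j \leq n-3),
\]
\[
t_{2n-4}(x,y,z) := u(x,\, y,\, \underbrace{z,\dots,z}_{n}).
\]
The first two families are just the J{\'o}nsson terms produced by Mitschke's construction from the $(n{+}1)$-near-unanimity term $v$, rewritten in $u$-notation with the duplicated first two $u$-positions collectively playing the role of $v$-position $1$. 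The term $t_{2n-4}$ is the critical modification.

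Verification of the J{\'o}nsson conditions is then largely routine. The identities $t_i(x,y,x) = x$ all reduce to instances of \eqref{b2}, because in every $t_i$ the variable $y$ occupies a position in the range $[2,n{+}2]$. The intermediate matching conditions $t_{2j-1}(x,z,z) = t_{2j}(x,z,z)$ and $t_{2j}(x,x,z) = t_{2j+1}(x,x,z)$ (for $1 \leq j \leq n{-}3$) reduce to syntactic identities between the $u$-expressions. The boundary identity $t_0(x,x,z) = t_1(x,x,z)$ is again an instance of \eqref{b2}, and the closing identity $t_{2n-4}(x,x,z) = z$ follows from $u(x,x,z,\dots,z) = v(x,z,\dots,z) = z$ by near-unanimity of $v$ (with $x$ as the unique minority).

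The hard part is the critical odd matching step $t_{2n-5}(x,z,z) = t_{2n-4}(x,z,z)$, which translates to $u(x,x,x,\underbrace{z,\dots,z}_{n-1}) = u(x,\underbrace{z,\dots,z}_{n+1})$: this is precisely equation \eqref{b3}. This is where the ``$\frac{1}{2}$'' earns its name: the natural $v$-Mitschke tail would require two intermediate terms $u(x^3, z^{n-1})$ and $u(x^2, y, z^{n-1})$ to bridge $t_{2n-5}$ to $z$, but \eqref{b3} lets the single interpolating term $u(x,y,z^n)$ do that job, saving exactly one J{\'o}nsson step and delivering $2n{-}3$-distributivity.
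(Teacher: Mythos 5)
Your construction produces exactly the same J{\'o}nsson sequence as the paper's proof (the paper writes the terms out with ellipses, while you give closed-form indexing), and your verification of the matching conditions, including the crucial use of \eqref{b3} at the odd step $t_{2n-5}(x,z,z)=t_{2n-4}(x,z,z)$, is correct. The conceptual framing you add — viewing the tail as the Mitschke staircase for $v(x_1,\dots,x_{n+1})=u(x_1,x_1,x_2,\dots,x_{n+1})$ with one step compressed by \eqref{b3} — is what the paper records separately in Remark~\ref{mki}, so this is essentially the paper's own proof.
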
  

\begin{proof}
If $u$ is an $n\frac{1}{2} $-near-unanimity term,  define
\begin{align*}
t_0(x,y,z) &= x,
\\
t_1(x,y,z) &= u(x,x, x, x,x, x, \dots,x, x,x, y, z),
\\
t_2(x,y,z) &= u(x,x, x, x, x, x,\dots, x, x,x, z, z),
\\
t_3(x,y,z) &= u(x,x, x, x, x, x,\dots, x, x,y, z, z),
\\
t_4(x,y,z) &= u(x,x, x, x, x, x,\dots, x, x,z, z, z),
\\
t_5(x,y,z) &= u(x,x, x, x,x, x, \dots, x, y,z, z, z),
\\
&\dots
\\
t_{2n-7}(x,y,z) &= u(x,x, x, x,y, z, \dots, z, z,z, z, z), 
\\
t_{2n-6}(x,y,z) &= u(x,x, x, x,z, z, \dots, z, z,z, z, z), 
\\
t_{2n-5}(x,y,z) &= u(x,x, x, y,z, z, \dots, z, z,z, z, z), 
\\
t_{2n-4}(x,y,z) &= u(x,y, z, z,z, z, \dots, z, z,z, z, z), 
\\
t_{2n-3}(x,y,z) &= z . \qedhere
 \end{align*} 
 \end{proof} 

\begin{remark} \labbel{mki}
(a) Notice that the case $i=3$
in equation \eqref{b2} has not been used in the proof
of Proposition \ref{ipp}.  

(b) With the only exception of the bottom lines,
the proof of  Proposition \ref{ipp}
implicitly uses directed J{\'o}nsson  terms.
Compare \cite[Section 5.3.1]{BK} 
and \cite[Observation 1.2]{KKMM}.   

In fact, the terms $t_0, t_1, t_3, t_5, \dots, t _{2n-7},t _{2n-5}  $ 
above satisfy the equations \eqref{j1}, \eqref{j2} and  \eqref{D}  for 
directed J{\'o}nsson terms. At the end, the relations change,
we have $ t _{2n-5}(x,z,z) =  t _{2n-4}(x,z,z)$
and $ t _{2n-4}(x,x,z)=z$, instead.
Thus, in the terminology from 
\cite{KKMM}, the sequence  
$ t_1, t_3, t_5, \dots, t _{2n-7},t _{2n-5},  t _{2n-4}  $
is a sequence of \emph{directed Gumm terms}. 
Notice that we do not need the case $i=2$
in equation \eqref{b2} in order to get 
a sequence of  directed Gumm terms.

Directed Gumm terms characterize congruence modularity,
they do not necessarily imply congruence distributivity.
However, using the case $i=2$
in  \eqref{b2}, we have in addition 
$ t _{2n-4}(x,y,x)=x$ and this further equation is enough to get
congruence distributivity.

(c) In other words, 
relabeling the terms, while a sequence 
$d_1, d_2, \dots, d_{m-2}, d_{m-1}$ of directed J{\'o}nsson terms
implies $2m{-}2$-distributivity, \red{under the counting
convention from  \cite[Observation 1.2]{KKMM}, }
 a sequence $d_1, d_2, \dots, d_{m-2}, q $ of directed Gumm terms
implies $2m{-}3$-distributivity, \emph{provided the term $q$
satisfies the additional equation $q(x,y,x)=x$.} 

General forms of similar equations have been 
studied in \cite{KV}.
 \end{remark}    

Lattice operations shall be denoted 
by $+$ and juxtaposition. Complement in Boolean algebras 
is denoted by  $'$.

\begin{theorem} \labbel{ip}
Let $n \geq 3$.
  \begin{enumerate}   
\item 
Every variety  with an
$n\frac{1}{2} $-near-unanimity term has
an $n{+}1$-ary near-unanimity term. 
\item 
There is a locally finite variety  
with an $n{+}1$-ary near-unanimity term 
but without an
$n\frac{1}{2} $-near-unanimity term.
\item
Every variety  with an $n$-ary near-unanimity term
has an $n\frac{1}{2} $-near-unanimity term.
\item
There is a locally finite variety 
with an 
$n\frac{1}{2} $-near-unanimity term
but without an $n$-ary near-unanimity term. 
  \end{enumerate}  
 \end{theorem}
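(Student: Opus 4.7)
The plan is to handle the four clauses in pairs: (1) and (3) are positive constructions obtained by identification or deletion of variables in the given term, whereas (2) and (4) are separation results obtained by combining Proposition \ref{ipp} (or Theorem \ref{mit}) with sharpness theorems on the distributivity level.

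For (1), given an $n\frac{1}{2}$-near-unanimity term $u$, the natural candidate is the $(n{+}1)$-ary term
\begin{equation*}
v(x_1, x_2, \dots, x_{n+1}) := u(x_1, x_1, x_2, x_3, \dots, x_{n+1}),
\end{equation*}
i.e., the first two arguments of $u$ are identified. Then $v(y,x,\dots,x) = u(y,y,x,\dots,x) = x$ by \eqref{b1}, and $v(x, \dots, x, \underset{j}{y}, x, \dots, x) = u(x, \dots, x, \underset{j+1}{y}, x, \dots, x) = x$ for $2 \leq j \leq n+1$ by \eqref{b2} at index $i = j+1 \in \{3,\dots,n+2\}$. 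Equation \eqref{b3} is not needed.

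For (3), given an $n$-ary near-unanimity term $w$, the natural candidate is the $(n{+}2)$-ary term
\begin{equation*}
u(x_1, x_2, x_3, x_4, \dots, x_{n+2}) := w(x_3, x_4, \dots, x_{n+2}),
\end{equation*}
in which the first two arguments are discarded. Then \eqref{b1} and the $i = 2$ case of \eqref{b2} both reduce to $w(x,\dots,x) = x$, while the remaining cases of \eqref{b2} coincide with the near-unanimity equations for $w$. For \eqref{b3}, both sides evaluate to $z$: the right-hand side is $w(z,\dots,z) = z$, and the left-hand side $w(x,z,\dots,z) = z$ is the near-unanimity equation of $w$ with the single ``odd'' variable ($x$) placed first, i.e., the defining equation of $w$ read with the roles of the two symbols swapped.

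For (2) and (4), the strategy is to separate the two conditions via their distributivity levels. In (2), the sharpness result of \cite{mis}, applied with parameter $n+1$ in place of $n$, provides a locally finite variety with an $(n{+}1)$-ary near-unanimity term that fails to be $2n{-}3$-distributive; by Proposition \ref{ipp} such a variety cannot admit an $n\frac{1}{2}$-near-unanimity term. In (4), the construction announced in the introduction and carried out in Section \ref{dl} will yield a locally finite variety with an $n\frac{1}{2}$-near-unanimity term that is not $2n{-}4$-distributive, which by Mitschke's Theorem \ref{mit} cannot admit an $n$-ary near-unanimity term.

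The only real obstacle is in (4): it relies on the not-yet-presented distributivity-sharpness construction of Section \ref{dl}, which the rest of the paper is organized to supply. Clauses (1) and (3) are short term-manipulation verifications, and (2) reduces to an invocation of the sharpness result from \cite{mis} with the arity parameter shifted by one.
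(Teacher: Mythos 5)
Your arguments for items (1), (2) and (3) coincide with the paper's proof essentially verbatim: identifying the first two arguments for (1), prepending two dummy variables for (3), and invoking the sharpness result from \cite{mis} (with arity shifted by one) together with Proposition~\ref{ipp} for (2). For item (4), however, the paper does \emph{not} wait for Section~\ref{dl}; it gives a short, self-contained construction: take the term reduct of the variety of Boolean algebras by the $n{+}2$-ary term $u(x_1,\dots,x_{n+2}) = (x_1 + x_2')\prod_{i<j,\ i,j\neq 2}(x_i + x_j)$. This term satisfies \eqref{b1}--\eqref{b3}, so the reduct has an $n\frac{1}{2}$-near-unanimity term; and inside the reduct of $\mathbf 2^n$ the set $B = \{0,1\}^n \setminus \{(1,\dots,1)\}$ is a subuniverse (a pigeonhole argument on the $n{+}1$ non-second arguments produces a shared $0$-coordinate), which rules out an $n$-ary near-unanimity term by the standard evaluation at the ``unit vectors.'' Your alternative route via Theorem~\ref{thd} is valid and is in fact recorded in the paper as Remark~\ref{altr}, but it is considerably heavier: it depends on Lemma~\ref{maj}, which in turn leans on the technical Claim/Subclaim machinery imported from \cite{mis}, whereas the paper's direct construction needs nothing outside the present section. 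If you want a self-contained proof of Theorem~\ref{ip}, you should supply the Boolean term-reduct construction rather than deferring to Section~\ref{dl}.
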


\begin{proof}
(1) If $u$ is an  
$n\frac{1}{2} $-near-unanimity term,
then the $n{+}1$-ary term $v$ defined by 
\begin{equation*}\labbel{vvv}
v(x_1, x_2, x_3, \dots , x_{n+1})=
 u(x_1, x_1, x_2, x_3, \dots , x_{n+1})
 \end{equation*}    
is a near-unanimity term, by \eqref{b1} and \eqref{b2}.

 Notice that we have not used \eqref{b3},  and we do not need
the case $i=2$ in \eqref{b2}.   
Notice further that the present argument
works also in the case $n=2$.
This aspect shall be put in a clearer light in  
Remark \ref{dueemm} below.

(2)
In \cite[Definition 3.5]{mis} 
we constructed a locally finite  variety $\mathcal N_{n+1}$
and we showed in \cite[Theorem 3.6]{mis}
that $\mathcal N_{n+1}$ has  an $n{+}1$-ary near-unanimity term  
but is not $2n{-}3$-distributive.
By Proposition \ref{ipp}, $\mathcal N_{n+1}$ witnesses (2).

(3) If $w$ is an $n$-ary near-unanimity term,
then, by adding two initial dummy variables,
$w$ becomes    an
$n\frac{1}{2} $-near-unanimity term.

(4) 
Let $\mathcal V$ be the term reduct of the variety of Boolean algebras
obtained considering  the term 
\begin{equation} \labbel{term}   
u(x_1,  x_2, x_3, \dots , x_{n+2}) =  
  (x_1 + x_2') \prod _{\substack{1 \leq i< j \leq n+2 \\ i \neq 2, j \neq 2}} (x_i + x_j) .   
 \end{equation}

\red{Since $n \geq 3$,} 
  the term $u$
satisfies \eqref{b1} - \eqref{b3}
in Boolean algebras, thus 
$u$, as an operation,
satisfies \eqref{b1} - \eqref{b3}  in $\mathcal V$.
Hence $\mathcal V$ has 
an $n\frac{1}{2} $-near-unanimity term.
$\mathcal V$ is locally finite, being a term reduct of the locally finite
variety of Boolean algebras.

Let $\mathbf 2$
denote the two-element Boolean algebra
with base set $\{ 0,1\}$.
Let $\mathbf A$
be the $u$ term-reduct of the $n$th power of $\mathbf 2$.
We shall show that 
  $B=A \setminus (1,1, \dots , 1)$
is the universe for an algebra in $\mathcal V$.
Indeed, let $b_1, b_2, \dots, b_{n+2} \in B$,
thus, for every $i \leq n+2$, at least one component of $b_i$
is $0$. Actually, we shall not use the assumption that
$b_2 \in B$.       
Since we are working in a power
with $n$ components and
the sequence $b_1, b_3, \dots, b_{n+2} $
has length $n+1$,  
 there are 
 $i \neq j \leq n+2 $ with $  i \neq 2 $, $  j \neq 2$
such that $b_i$ and  $b_j$
have some $0$ at the same component. 
 Thus 
$b=u(b_1, b_2, \dots, b_{n+2})$
has $0$ at that component, hence   
$b \in B$.

Now it is standard to see that 
$\mathcal V$ has not an $n$-ary near-unanimity term.
If, by contradiction,
 $v$ is such a term, then in $\mathbf A$ 
\begin{align*}
&v((0,1,1, \dots, 1), (1,0,1, \dots, 1), \dots, (1,1,1, \dots, 0))=
\\
&(v(0,1,1, \dots, 1), v(1,0,1, \dots, 1), \dots, v(1,1,1, \dots, 0))=
(1,1,1,\dots,1),
  \end{align*}    
contradicting the  just-proved fact that $\mathbf  B$ is a 
subalgebra of $\mathbf A$.
\end{proof}

\begin{problem} \labbel{dimdir}   
Is there a more direct 
proof of item (2)
in Theorem  \ref{ip}
 which does not rely on \cite{mis}? 
For example, is  some variation on
the arguments in the proof
of \ref{ip}(4) enough?
\end{problem}

We are now in the position to give a proof
for item (1) in Theorem \ref{tut}.
Parts (2) and (3) shall need more effort. 

 \begin{proof}[Proof of Theorem \ref{tut}(1)]
 Suppose that $h,k \in \mathbb N \cup (\mathbb N + \frac{1}{2}) $
and
$h,k \geq 3$.  
If $h$ is a half-integer   
and $\mathcal V$ has an
$h$-near-unanimity term, 
then 
$\mathcal V$ has an
$h{+}\frac{1}{2}$-near-unanimity term
by Theorem \ref{ip}(1). 
If $h$ is an integer   
and $\mathcal V$ has an
$h$-near-unanimity term, 
then 
$\mathcal V$ has an
$h{+}\frac{1}{2}$-near-unanimity term
by Theorem \ref{ip}(3). 

 By induction, we get that 
if $h  \leq k$, then every variety with an $h$-near-unanimity term   
has a $k$-near-unanimity term.
It then follows from Theorem \ref{ip}(2)(4) 
that if $h <k$,
then there is 
a locally finite variety    with a $k$-near-unanimity term   
and  without a $k{-} \frac{1}{2} $-near-unanimity term, 
hence without an $h$-near-unanimity term, 
\red{since $h \leq k-\frac{1}{2}$.} 
 \end{proof}

\section{Distributivity levels} \labbel{dl} 

In this section we hint to a proof that
Proposition \ref{ipp} cannot be improved, namely, 
for every  $n \geq 3$, there is a variety with an
$n\frac{1}{2} $-near-unanimity term
 which is not  $2n{-}4$-distributive. 
The proof relies heavily on \cite{mis}. 
We first need to establish a considerable amount
of notation and conventions.
In particular, we need to recast many notions
from \cite{mis} in the setting where
a dummy variable is added to a special set of lattice terms.

Throughout the present section
$n$ is a natural number $\geq 3$ and  $m=n+1$.
We first define the variety we shall use in order to provide the
main counterexample.

\begin{definition} \labbel{+lav}
Recall that  $m \geq 4$ and suppose that  $2 \leq j \leq m$.

(a) We define $u _{j,m}^+$
to be the $m{+}1$-ary 
lattice term  
\begin{equation}\labbel{+lte}
u _{j,m}^+(x_1, \dots, x_{m+1})= \prod _{|J|=j} \sum _{i \in J} x_i 
   \end{equation}    
where $J$ varies on  subsets of 
$\{ 1, 3,4, 5, \dots, m-1, m, m+1 \}$.
The definition is given modulo any fixed but otherwise arbitrary arrangement
of summands and factors; in fact, we shall never
use the actual term structure of $u _{j,m}^+$,
we shall only deal with the way it is evaluated.
The term $u _{j,m}^+$ corresponds to the term 
$u _{j,m}$ from \cite[Definition 3.3]{mis} when a dummy variable
is added at the second place.
This is necessary for definiteness, since we shall combine
lattice reducts endowed with operations given by
$u _{j,m}^+$ and Boolean reducts endowed with
an $m{+}1$-ary operation. 

(b) Again with $m \geq 4$ and  $2 \leq j \leq m$,
we let
$\mathbf N ^{j,m,+}$
be the $u _{j,m}^+$-reduct
of the two-element lattice with base set 
$\{ 0, 1\}$. 

(c) For $n \geq 3$, we let $\mathbf G^{{\rm nu}, n}$ 
denote the term reduct of the two-element Boolean algebra 
obtained by considering the $n{+}2$-ary term defined in \eqref{term},
 which we recall  below:
\begin{equation} \labbel{term2}   
u(x_1,  x_2, x_3, \dots , x_{n+2}) =  
  (x_1 + x_2') \prod _{\substack{1 \leq i< j \leq n+2 \\ i \neq 2, j \neq 2}} (x_i + x_j) .   
 \end{equation}

(d) Suppose that $n \geq 4$ and  $m=n+1$.
Let 
$\ell = \frac{m+1}{2} $
if $m$ is odd and
  $\ell = \frac{m}{2} $
if $m$ is even. We define
$\mathcal {N}_{n \frac{1}{2} }$
to be the variety 
generated by the algebras
\begin{equation*}
\mathbf G^{{\rm nu}, n}, \quad
 \mathbf N ^{3, m,+}, \quad 
\mathbf N ^{4, m,+}, \quad 
 \dots, \quad \mathbf N ^{ \ell, m,+}.
\end{equation*}

Notice that all the above algebras have
an $n{+}2$-ary operation, since $m=n+1$
and $u _{j,m}^+$ is $m{+}1$-ary.
Hence the definition is correct.  

(e) Under the assumptions in (d), we let 
$\mathcal {N}^{3,+}_{m}$
 be the variety 
generated by the algebras
\begin{equation*}
 \mathbf N ^{3, m,+}, \quad 
\mathbf N ^{4, m,+}, \quad 
 \dots, \quad \mathbf N ^{ \ell, m,+}.
\end{equation*}

\red{(f) In (d) we have defined 
$\mathcal {N}_{n \frac{1}{2} }$
for $n \geq 4$; however, we need to define
$\mathcal {N}_{n \frac{1}{2} }$ also 
for $n=3$. We let  
$\mathcal {N}_{3 \frac{1}{2} }$
be the variety generated by 
$\mathbf G^{{\rm nu}, 3}$.
The variety $\mathcal {N}_{3 \frac{1}{2} }$ 
is term-equivalent to a variety 
we have called $\mathcal I^-_4$
in \cite[Section 4, p. 15]{mis}.
We now recall the definition of $\mathcal I^-_4$
from \cite{mis}.} 
The variety $\mathcal I^-_4$ is
generated by term reducts of Boolean algebras
by considering both the terms $f(x,y,z)=x(y'+z)$
and $u _{2,4}(x_1,x_2,x_3,x_4) = \prod _{i < j \leq4}  (x_i + x_j)$. 
We now notice that these terms are expressible as a function of $u$
 from \eqref{term2} in the case $n=3$.
Indeed, $f(x,y,z)=u(z,y,x,x,x)$ and 
   $u _{2,4}(x_1,x_2,x_3,x_4)=u(x_1,x_1,x_2,x_3,x_4)$.
Conversely, $u$ can be expressed as 
$u(x_1,x_2,x_3,x_4,x_5) = 
f(u _{2,4}(x_1,x_3,x_4,x_5),x_2,x_1)$.
Hence $\mathcal {N}_{3 \frac{1}{2} }$ 
and  $\mathcal I^-_4$ are term-equivalent.
 \end{definition}   

\begin{remark} \labbel{op3}
If $m \geq 5$, then the operation in $\mathcal {N}^{3,+}_{m}$
has the property that, disregarding the second argument, if all 
the other arguments but two are given the same value, then
the operation returns this value. In fact, this property holds
in all the generating algebras, since the first upper indices 
in \ref{+lav}(e) are all $\geq 3$,
namely,  $j$ in \eqref{+lte} is always $\geq 3$.   
Moreover, since $m \geq 5$, we always have 
 $m \geq j+2$, for $j= 3,4, \dots, \ell$, because of the definition 
of $\ell$.  
  \end{remark}   

It is  convenient to extend the  $ {\stackrel{k}{\dots}}  $ notation
from the introduction. If $R$ and $S$ are reflexive binary relations,
we let $ R \circ S \circ \stackrel{1} {\dots} = R$
and  
$ R \circ S \circ \stackrel{0} {\dots} = 0$,
where $0$ is the minimal  
congruence on the algebra under consideration.
Moreover, $R^k$ is 
$ R \circ R \circ \stackrel{k} {\dots} $,
in particular, $R^0 = 0$.  

\begin{lemma} \labbel{maj}
Let $n \geq 4$ and
$m=n+1$.
Then there are  
an algebra $\mathbf A_3^+ \in \mathcal {N}_m^{3,+}$
and a subalgebra $\mathbf F$ of 
$ \mathbf A_3^+ \times 
\mathbf G^{{\rm nu}, n}$ 
such that the congruence identity
\begin{equation}\labbel{blah}
\tilde \alpha ( \tilde\beta \circ \tilde \gamma ) \subseteq 
(\tilde \alpha (\tilde \gamma \circ \tilde \beta )) ^{m-4} 
   \end{equation}    
fails in $\mathbf F$.

Moreover, the failure of 
\eqref{blah} 
can be witnessed 
by elements 
$(a,1)$, $(d,1)$, $(c,0)$ 
and congruences $ \tilde \alpha $,
\red{induced by }
$ 1 \times 0$, $\tilde \beta$, $\tilde \gamma$ 
of $\mathbf F$
such that   
$(a,1) \mathrel { \tilde \alpha  } (d,1)$,
$(a,1) \mathrel { \tilde \beta  } (c,0) \mathrel { \tilde \gamma  }  (d,1)$
and
$((a,1),(d,1)) \notin (\tilde \alpha ( \tilde \gamma \circ \tilde \beta )) ^{m-4} $.
 \end{lemma}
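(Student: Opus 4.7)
The plan is to lift into our mixed-variety setting the counterexample constructed in \cite{mis} for the analogous failure in the dummy-free variety (the one generated by the versions of $\mathbf{N}^{3,m}, \dots, \mathbf{N}^{\ell,m}$ before the second argument is turned into a dummy). First I would take from \cite{mis} the algebra $\mathbf{A}_3$, the witnessing subalgebra of its power, and the congruences $\alpha, \beta, \gamma$ with representatives $a, d, c$ that witness the failure of $\alpha(\beta\circ\gamma) \subseteq (\alpha(\gamma\circ\beta))^{m-4}$ there. The algebra $\mathbf{A}_3^+ \in \mathcal{N}_m^{3,+}$ is then obtained by reinterpreting the fundamental $(m{+}1)$-ary operation so that it ignores its second argument, consistently with Definition \ref{+lav}(a) and Remark \ref{op3}.

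Next, I would take $\mathbf{F}$ to be the subalgebra of $\mathbf{A}_3^+ \times \mathbf{G}^{nu,n}$ generated by the image of the \cite{mis} witnessing set under $x \mapsto (x, 1)$ together with the single bridging element $(c, 0)$. A preliminary closure computation --- analogous to the $B = A \setminus (1, \dots, 1)$ step in the proof of Theorem \ref{ip}(4), and exploiting the asymmetric role of the second argument of $u$ in \eqref{term2} --- should confirm that the elements of $\mathbf{F}$ with second coordinate $0$ form a tightly controlled set, while the slice of $\mathbf{F}$ over second coordinate $1$ contains a faithful copy of the original \cite{mis} witness. I then set $\tilde\alpha := 1 \times 0$, the kernel of the projection to $\mathbf{G}^{nu,n}$, and define $\tilde\beta$ and $\tilde\gamma$ to be the congruences of $\mathbf{F}$ generated respectively by $\beta \cup \{((a, 1), (c, 0))\}$ and $\gamma \cup \{((c, 0), (d, 1))\}$, with $\beta$ and $\gamma$ read off of the top slice.

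The heart of the argument, and the main obstacle, is to show that no $(\tilde\alpha(\tilde\gamma \circ \tilde\beta))^{m-4}$ chain connects $(a, 1)$ to $(d, 1)$. Transitivity of $\tilde\alpha$ forces the outer vertices $(a, 1) = x_0, x_1, \dots, x_{m-4} = (d, 1)$ of such a chain to lie in the top slice, while the only inner bridging vertex with second coordinate $0$ available is $(c, 0)$ itself. By the choice of generators of $\tilde\beta, \tilde\gamma$, any step that uses $(c, 0)$ as its inner vertex forces its endpoints into the $\gamma$-class of $d$ on one side and the $\beta$-class of $a$ on the other --- that is, into the ``wrong direction'' for progress along the chain. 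After eliminating such unproductive steps, the chain reduces to one living entirely in the top slice, and projecting onto the first coordinate yields an $(\alpha(\gamma \circ \beta))^{m-4}$ zigzag between $a$ and $d$ in the original setup of \cite{mis}, contradicting the failure established there. The delicate point is confirming that $(c, 0)$-transit steps can be eliminated without breaking the chain; this should be a finite combinatorial check running in close parallel to the analogous argument in \cite{mis}, with the second argument of $u_{j,m}^+$ playing a passive role via Remark \ref{op3}.
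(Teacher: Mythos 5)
The paper's proof of Lemma~\ref{maj} is a one-paragraph transfer argument: it reuses the Claim and Subclaim of the proof of Theorem~3.6 in \cite{mis} (with $j=3$, $q=2$) \emph{verbatim}, pointing out (a) that replacing the operation of $\mathbf A_3$ by its dummy-second-argument version $\mathbf A_3^+$ changes nothing in that reasoning, and (b) --- the decisive observation your proposal does not make --- that the \emph{only} property of the partner algebra used in the Claim of \cite{mis} is that its operation returns $0$ whenever two arguments (ignoring the second) are $0$. Since the term $u$ in \eqref{term2} has exactly this two-absorption property, the same subalgebra $\mathbf F$, the same congruences $\tilde\alpha, \tilde\beta, \tilde\gamma$, the same elements $a,d,c$, and the same chain argument all carry over once the partner factor is replaced by $\mathbf G^{nu, n}$. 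No new construction is needed.

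Your proposal is a genuinely different route, and it has gaps. You rebuild $\mathbf F$ as the subalgebra \emph{generated} by the lifted witnessing set together with $(c,0)$, and you take $\tilde\beta$, $\tilde\gamma$ to be congruences \emph{generated} by $\beta \cup \{((a,1),(c,0))\}$ and $\gamma \cup \{((c,0),(d,1))\}$. Generated objects of a product algebra are very hard to control. Closing your generating set under $u$ will, in general, produce many elements of $\mathbf F$ with second coordinate $0$, so your pivotal claim that ``the only inner bridging vertex with second coordinate $0$ available is $(c,0)$ itself'' is unjustified and quite likely false for that $\mathbf F$. Similarly, the congruence of $\mathbf F$ generated by $\beta \cup \{((a,1),(c,0))\}$ can be far larger than the naive lift of $\beta$ along $x \mapsto (x,1)$, so the final step --- eliminating $(c,0)$-transits and projecting the chain onto the top slice to contradict \cite{mis} --- does not go through as sketched. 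The clean fix is to take $\mathbf F$, $a$, $d$, $c$ and the congruences to be the \emph{exact} objects appearing in the Claim of \cite{mis} (not re-generated versions), and then to check only the single absorption property of $\mathbf G^{nu, n}$ noted above, as the paper does.
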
 

\begin{proof}
This is proved like the Claim and the Subclaim
in the proof of Theorem 3.6 in \cite{mis}
with $j=3$ and $q=2$. 
The only difference is that here the second argument of the operations
need not be considered in the reasonings from \cite{mis}.
The reasonings obviously work here in the case of  
algebras in $\mathcal {N}^{3,+}_{m}$,
whose operation does not depend on the second argument
and is otherwise the same as in the variety $\mathcal {N}^{3}_{m}$
from \cite[p. 8]{mis}. On the other hand, the only property
of the other algebra \red{(called $\mathbf N^{2, m}$ there) }
used in the Claim in \cite[p. 8]{mis} is that 
if two $0$'s appear in the arguments of the operation, then the
outcome is $0$. Not considering the second argument,
this property  is true of
$\mathbf G^{{\rm nu}, n}$, as well, since its operation is defined by \eqref{term2}.   
\end{proof}

The next lemma is essentially a special case of 
\cite[Lemma 2.2]{mis} for terms with an additional variable.
We give a direct proof since it is simpler
than establishing the  notation
necessary for exploiting the connection with 
\cite[Lemma 2.2]{mis}.
\red{The ``types'' in the statement of the next lemma
are marked with a $\sigma$ in order to distinguish them 
from similar  types  
 we have used in some other places, permuting
the coordinates.
The issue is discussed in \cite[p. 4]{mis}.} 

\begin{lemma} \labbel{nua}
Suppose that $n \geq 4$ and $m=n+1$.  
Let $\mathbf 4$
denote the four-element Boolean algebra
with base set $\{ 0,e,e',1\}$ and 
let $\mathbf A$
be the $u$-term reduct of  $\mathbf 4$,
where $u$ is the term defined in equation  \eqref{term2}
in Definition \ref{+lav}(c). 

Let $\mathbf A_3^+ \in \mathcal {N}_m^{3,+}$,
 $\mathbf F \subseteq 
 \mathbf A_3^+ \times \mathbf G^{{\rm nu}, n} $,
$a, d \in A_3^+$ 
 and 
let $B$ be the subset of $  A \times A \times F $
consisting of those elements which have 
at least one of the following types
(modulo the natural identification of a triple
containing a subpair with a quadruple)
\begin{equation*}
\begin{gathered}
\text{Type I$^ \sigma$} \\
(\hyphe, 0, a, \hyphe)
\end{gathered} 
\qquad\qquad
\begin{gathered}
\text{Type II$^ \sigma$} \\
(0, 0, \hyphe, \hyphe),
  \end{gathered} 
\qquad\qquad
\begin{gathered}
\text{Type III$^ \sigma$} \\
(0, \hyphe, d, \hyphe)
  \end{gathered} 
\qquad\qquad
\begin{gathered}
\text{Type IV$^ \sigma$} \\
(\hyphe, \hyphe,  \hyphe, 0),
  \end{gathered} 
\end{equation*}    
where dashed places
can be filled with arbitrary elements from the 
corresponding algebras,
under the provision that each $4$-uple 
actually belongs to $  A \times A \times F $,
namely, that the pair consisting of the last two coordinates
belongs to  $F$.

 Then $B$ is the base set for a subalgebra
$\mathbf B$  of 
$\mathbf A \times \mathbf A \times \mathbf F$,
hence also a subalgebra  of $\mathbf A \times \mathbf A \times 
 \mathbf A_3^+ \times \mathbf G^{{\rm nu}, n}$.
 \end{lemma}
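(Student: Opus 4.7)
The plan is to verify, by case analysis on how the types I$^\sigma$--IV$^\sigma$ are distributed among the inputs, that applying $u$ coordinatewise to $n+2$ elements of $B$ produces an element still satisfying at least one of those types. I would fix $b_1,\ldots,b_{n+2}\in B$, write $b_i=(b_i^{(1)},b_i^{(2)},b_i^{(3)},b_i^{(4)})$, assign each $b_i$ the non-empty set of types it satisfies, and set $c=u(b_1,\ldots,b_{n+2})$.

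The first step is to record the ``absorption'' behaviour of $u$ in each of the four constituent algebras. In the Boolean reducts $\mathbf A$ and $\mathbf G^{nu,n}$, the factored form \eqref{term2} shows that the output vanishes as soon as either $(x_1,x_2)=(0,1)$ or two of the arguments at positions different from $2$ are $0$. In the lattice reduct $\mathbf A_3^+$, Remark \ref{op3} tells us that the $u$-operation ignores its second argument and returns the common value $v$ of its other arguments whenever at most two of them deviate from $v$; in particular, $c^{(3)}=a$ (resp.\ $d$) as soon as enough of the $b_i^{(3)}$'s are $a$ (resp.\ $d$) at positions $i\neq 2$.

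Next I would invoke a pigeonhole argument on the $n+1\geq 5$ indices $i\neq 2$: some type must be witnessed at least twice on that set. Each of the four resulting cases is then checked separately. If two Type IV$^\sigma$ witnesses occur at indices $\neq 2$, the absorption in $\mathbf G^{nu,n}$ forces $c^{(4)}=0$ and $c$ is of Type IV$^\sigma$. If two Type II$^\sigma$ witnesses occur at indices $\neq 2$, the same absorption in $\mathbf A$ gives $c^{(1)}=c^{(2)}=0$, and $c$ is of Type II$^\sigma$. Two Type I$^\sigma$ (resp.\ Type III$^\sigma$) witnesses contribute simultaneously the required zeros in the second (resp.\ first) $\mathbf A$-coordinate and enough $a$'s (resp.\ $d$'s) in the $\mathbf A_3^+$-coordinate so that the absorption of Remark \ref{op3} yields $c^{(3)}=a$ (resp.\ $d$); the zeros at other coordinates contributed by Types II and IV are then enough to upgrade $c$ into Type I$^\sigma$ or Type III$^\sigma$, possibly after reshuffling via the remaining types.

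The main obstacle I expect is the boundary situation in which the critical witnessing data happens to live at the index $i=2$, which is why the lemma is stated for the modified term with a dummy second variable in the first place. In both Boolean reducts the factor $(x_2+x_j)$ is deliberately absent from \eqref{term2}, and the operation of every generator of $\mathcal N_m^{3,+}$ ignores its second argument by Remark \ref{op3}; so whatever $b_2$ contributes is harmless and the boundary subcases collapse into the main ones, closely mirroring the Claim in the proof of \cite[Theorem 3.6]{mis}. Once $B$ is shown to be closed under $u$, it is by definition a subalgebra of $\mathbf A\times\mathbf A\times\mathbf F$, and hence also of $\mathbf A\times\mathbf A\times\mathbf A_3^+\times\mathbf G^{nu,n}$, completing the proof.
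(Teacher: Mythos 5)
Your overall plan — fix $b_1,\dots,b_{n+2}\in B$, compute $c=u(b_1,\dots,b_{n+2})$ coordinatewise, record the absorption behaviour in each factor, and show $c$ still has one of the four types — matches the paper's, as does the observation that the second argument can be disregarded in every factor. However, there is a genuine error in the pigeonhole step. You claim that two Type I$^\sigma$ (resp.\ III$^\sigma$) witnesses among the $b_i$'s with $i\neq 2$ already force $c^{(3)}=a$ (resp.\ $d$) via Remark~\ref{op3}. That is not what the remark gives you: it is an ``all-but-two'' absorption law, so the $\mathbf A_3^+$-coordinate of $c$ equals $a$ only when at most two of the $n+1$ relevant arguments differ from $a$, i.e.\ when at least $n-1\ge 3$ of the $b_i$ ($i\neq 2$) have third coordinate $a$. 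Two Type I$^\sigma$ witnesses guarantee only $c^{(2)}=0$ (Boolean absorption in the second $\mathbf A$-coordinate); the third coordinate is unconstrained. The remaining hand-waving (``the zeros at other coordinates contributed by Types II and IV are then enough to upgrade $c$ \dots possibly after reshuffling'') does not close this hole and in fact points in the wrong direction, since extra Type II or IV inputs push $c$ toward Types II or IV, not Type I.

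The paper avoids the problem by casing on the absorption triggers rather than pigeonholing on types: first, if at least two $b_i$ ($i\neq 2$) have fourth coordinate $0$, then $c$ is Type IV$^\sigma$; otherwise at most one does, and if moreover at least two have first coordinate $0$ and at least two have second coordinate $0$, then $c$ is Type II$^\sigma$; in the remaining case one of the first two coordinates has at most one zero among the $n+1$ non-second inputs, so together with the single allowed fourth-coordinate zero this forces at least $n-1$ of them to be Type I$^\sigma$ (or, symmetrically, Type III$^\sigma$), and only then do you have the $n-1$ agreeing third coordinates needed for Remark~\ref{op3}, plus $\ge 2$ zeros for the Boolean absorption. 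You can salvage your pigeonhole approach by performing exactly this sub-casing after the pigeonhole fires, but as written the argument asserts an absorption for $\mathbf A_3^+$ that does not hold with only two witnesses.
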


\begin{proof}
First, notice that $B$ is nonempty, for example,
\red{$(0,0,f_1,f_2) \in B$, if $(f_1,f_2)\in F$. }
Let $b_1, b_2, \dots, b_{n+2} \in B$.
We have to show that   
$b=u(b_1, b_2, \dots b_{n+2}) $, as computed in 
$\mathbf A \times \mathbf A \times 
 \mathbf A_3^+ \times \mathbf G^{{\rm nu}, n}$,
belongs to $  B$. 
Since the last two coordinates of 
each $b_i$ form a pair in $  F$,
then   the last two coordinates of 
 $b$ form a pair belonging to $ F$,
since $ \mathbf F$ is a subalgebra of 
$\mathbf A_3^+ \times \mathbf G^{{\rm nu}, n}$.
It remains to show that
$b$ has one of the types 
 I$^ \sigma$ - IV$^ \sigma$.

In the following considerations we shall deal with 
the $n{+}1$-element set 
$ \{ b_1, b_3, b_4, \allowbreak  \dots b_{n+2}\}$;
the element $b_2$ shall play no role in the discussion.
If two or more among the 
 $b_i$'s (not considering $b_2$) have $0$ at the fourth component, 
then   
 $u(b_1, b_2, \dots b_{n+2}) $ has 
$0$ at the fourth component, since the fourth component
is evaluated in 
$\mathbf G^{{\rm nu}, n}$.
Here we have used the fact, already
mentioned in the proof of Lemma \ref{maj},  that
$0$ in $\mathbf G^{{\rm nu}, n}$ is ``$2$-absorbing disregarding  the
second argument'' for  
 the term 
defined in  equation   \eqref{term2}. 
In this case $b$ has type IV$^ \sigma$, hence we are done.

Therefore we may suppose that 
at most one  $b_i$ has $0$ at the fourth component. 
If two or more among the 
 $b_i$'s  have $0$ at the first component
and 
two or more among the 
 $b_i$'s  have $0$ at the second component
 (again, in both cases, not considering $b_2$),
then $b$ has $0$
both at the first and at the second component,
again by the absorbing properties of $0$, 
thus $b$ has type    II$^ \sigma$, hence 
$b \in B$ and we are done in this case, too.

Otherwise, again not considering $b_2$,
 there are, say, at least $n-1$ many 
$b_i$'s with the first component different from $0$,
hence of type   I$^ \sigma$, since
we have excluded type  IV$^ \sigma$
and we are dealing with elements having at least one type
from I$^ \sigma$ - IV$^ \sigma$.
Such $b_i$'s have thus $a$ at the third component, 
hence $b$  has $a$ at the third component, by Remark \ref{op3}, to the effect that   
the operation on the third coordinate is ``$n{-}1$-majority disregarding the second
argument''.
 Thus $b$ has type  I$^ \sigma$, hence $b  $ belongs to $ B$.

Similarly, if there are at least $n-1$ many 
$b_i$'s with the second  component different from $0$,
then they are  of type   
\red{III$^ \sigma$ }
and then $b$, too, has type  
\red{III$^ \sigma$, }
thus $b \in B$.
\end{proof}

\begin{theorem} \labbel{thd}
For every $n \geq 3$, the variety 
$\mathcal {N}_{n \frac{1}{2} }$
introduced in Definition \ref{+lav}(d)(f) 
 has an $n\frac{1}{2} $-near-unanimity term
and  is not  $2n{-}4$-distributive.

More generally,  the congruence identity
\begin{equation}\labbel{blahh}
  \alpha (  \beta \circ   \gamma ) \subseteq 
 \alpha \beta  \circ (  \alpha (  \gamma \circ   \beta )) ^{n-3} 
\circ \alpha \gamma  
   \end{equation}    
fails in $\mathcal {N}_{n \frac{1}{2} }$.
 \end{theorem}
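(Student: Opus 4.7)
The proof divides into two parts. For the existence of an $n\frac{1}{2}$-near-unanimity term in $\mathcal N_{n\frac{1}{2}}$ I would verify the equations \eqref{b1}--\eqref{b3} directly on each generator. In $\mathbf G^{nu,n}$ this is the Boolean computation already used in Theorem \ref{ip}(4). In each $\mathbf N^{j,m,+}$ ($3\le j\le \ell$, only needed for $n\ge 4$) the second variable of $u^+_{j,m}$ is dummy, so each equation reduces to a two-variable lattice statement. For \eqref{b1} and \eqref{b2}, the bound $j\le \ell\le m-2$ guarantees a $j$-element subset $J$ of the non-dummy indices that avoids the unique non-majority position; the corresponding sum in $u^+_{j,m}$ equals the majority value $x$ and absorbs the rest of the meet. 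For \eqref{b3} a $j$-subset disjoint from $\{1,3\}$ (available because $j\le m-2$) supplies a factor equal to $z$, while all other factors are of the form $x+z\ge z$, so both sides collapse to $z$. The case $n=3$ reduces to $\mathbf G^{nu,3}$, and is already covered by the same Boolean computation.

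For the failure of \eqref{blahh} I would work inside $\mathbf B$ from Lemma \ref{nua}, embedded in $\mathbf A\times\mathbf A\times\mathbf A_3^+\times\mathbf G^{nu,n}\in\mathcal N_{n\frac{1}{2}}$. Three congruences $\alpha,\beta,\gamma$ on $\mathbf B$ should be chosen so that their projections onto the last two coordinates recover $\tilde\alpha,\tilde\beta,\tilde\gamma$ of Lemma \ref{maj}, with the front two $\mathbf A$-coordinates rigged asymmetrically: $\beta$ never changes the second $\mathbf A$-coordinate and $\gamma$ never changes the first, while $\alpha$ is sufficiently permissive on the front coordinates to admit the key relation below. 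With witnesses
\begin{equation*}
p=(1,0,a,1),\qquad q=(0,0,c,0),\qquad r=(0,1,d,1),
\end{equation*}
all lying in $B$ (of types I$^\sigma$, II$^\sigma\cap$IV$^\sigma$, and III$^\sigma$ respectively), the relations $p\,\alpha\,r$ and $p\,\beta\,q\,\gamma\,r$ are immediate, placing $(p,r)\in\alpha(\beta\circ\gamma)$.

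To refute $(p,r)\in\alpha\beta\circ(\alpha(\gamma\circ\beta))^{n-3}\circ\alpha\gamma$, my plan is to consider an alleged chain $p=p_0,\dots,p_{n-1}=r$ and project to $\mathbf F$ along the last two coordinates. The pivotal claim is that the first $\alpha\beta$-edge and the last $\alpha\gamma$-edge cannot advance the $\mathbf F$-image: the asymmetric restriction of $\beta$ forces $p_1$ to share $p$'s second $\mathbf A$-coordinate (namely $0$), so a case analysis through the types defining $B$, together with the constraint that $p_1$'s fourth coordinate equals $p$'s under $\alpha$, collapses its $\mathbf F$-image to that of $p$; a symmetric argument using the first $\mathbf A$-coordinate and type III$^\sigma$ handles $p_{n-2}$ and $r$. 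The surviving $n-3=m-4$ middle edges then descend to an $(\tilde\alpha(\tilde\gamma\circ\tilde\beta))^{m-4}$-chain in $\mathbf F$ from $(a,1)$ to $(d,1)$, contradicting Lemma \ref{maj}. The main obstacle is precisely this type analysis, which must simultaneously be restrictive enough to kill the two outer edges and lax enough to preserve the basic witness $(p,r)\in\alpha(\beta\circ\gamma)$; a parallel difficulty is handled in the proof of Theorem 3.6 of \cite{mis}, and an adaptation of that argument is what I would attempt. Once established, Definition \ref{ndist} immediately yields the failure of $2n{-}4$-distributivity, completing the proof of Theorem \ref{thd}.
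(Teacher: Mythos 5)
Your verification that the generators of $\mathcal N_{n\frac{1}{2}}$ satisfy \eqref{b1}--\eqref{b3} is correct and matches the paper's reasoning (the paper packages the lattice computations via Remark \ref{op3}, but the content is the same). The gap lies in the second part, in the construction of the congruences $\alpha,\beta,\gamma$ on $\mathbf B$ and the claimed ``collapse'' of the outer edges.

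For the outer-edge argument to work, you need $\bar a \mathrel{\alpha\beta} p_1$ to \emph{force} $p_1$ to be of type~I$^\sigma$, so that its third coordinate is $a$ and hence its $\mathbf F$-image is exactly $(a,1)$. Your asymmetric prescription (``$\beta$ never changes the second $\mathbf A$-coordinate'') cannot accomplish this: since you need $p=(1,0,a,1)\mathrel\beta q=(0,0,c,0)$, the congruence $\beta$ must already identify $0$ and $1$ in the first $\mathbf A$-coordinate, so $\beta$ is full there. Consequently the only constraints on $p_1$ are that its second coordinate is $0$ and (by $\alpha$) its fourth coordinate is $1$, i.e.\ $p_1=(\,?\,,0,\,?\,,1)$. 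But an element of this shape may perfectly well be of type~II$^\sigma$ (first coordinate $0$, third coordinate arbitrary subject to $(\,\cdot\,,1)\in F$), and the type definitions give no control over its $\mathbf F$-image beyond being $\tilde\alpha\tilde\beta$-related to $(a,1)$ --- which is not equality. So the case analysis you invoke does not close. The paper's remedy is precisely why $\mathbf A$ is taken to be the \emph{four}-element Boolean reduct $\{0,e,e',1\}$ and the middle witness is $\bar c=(e,e',c,0)$ rather than $(0,0,c,0)$: the congruences $\beta^*$ with blocks $\{1,e\},\{e',0\}$ and $\gamma^*$ with blocks $\{1,e'\},\{e,0\}$ are nontrivial on $\mathbf A$, they still allow $\bar a\mathrel\beta\bar c\mathrel\gamma\bar d$, but they never identify $0$ with $1$. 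Then $\bar a\mathrel{\alpha\beta}\bar g$ forces the first coordinate of $\bar g$ into $\{1,e\}$, excluding types II$^\sigma$, III$^\sigma$, IV$^\sigma$ outright and pinning $\bar g$ to type~I$^\sigma$ with third coordinate $a$ (symmetrically for $\bar h$ via $\gamma^*$). This is the missing ingredient, and it is not an ``asymmetry'' but a careful use of the extra Boolean elements $e,e'$. Finally, note that Lemmas \ref{maj} and \ref{nua} are stated only for $n\geq 4$; the $n=3$ case of the failure of \eqref{blahh} is handled separately in the paper via \cite[Proposition 5.3]{mis} and the term-equivalence observed in Definition \ref{+lav}(f), a point your proposal does not address.
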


 \begin{proof}
\red{We first deal with the special case $n=3 $. 
 The first paragraph in the proof of 
Theorem \ref{ip}(4) shows that  $\mathcal {N}_{3 \frac{1}{2} }$ 
has a $3 \frac{1}{2} $-near-unanimity term.
On the other hand,  $\mathcal {N}_{3 \frac{1}{2} }$
is not $2$-distributive.
Indeed, in Definition   \ref{+lav}(f)
we have showed that $\mathcal {N}_{3 \frac{1}{2} }$ 
is term-equivalent to the variety 
 $\mathcal I^-_4$
from \cite[Section 4, p. 15]{mis}.
Moreover, \cite[Proposition 4.4]{mis}
shows that $\mathcal I^-_4$ is not 
$2$-distributive, hence 
$\mathcal {N}_{3 \frac{1}{2} }$ is not 
$2$-distributive, since, by definition, $2$-distributivity is preserved
under term-equivalence.
It follows that 
\eqref{blahh} fails for $\mathcal {N}_{3 \frac{1}{2} }$
when $n=3$, since, by
 our conventions,
if $n=3$, then \eqref{blahh}
reads $  \alpha (  \beta \circ   \gamma ) \subseteq 
 \alpha \beta  \circ  \alpha \gamma  $, and this identity is
equivalent to
$2$-distributivity, by the comment in Definition \ref{ndist}.    
We have proved the theorem in the case $n=3$.} 

So let $n \geq 4$. 
The variety $\mathcal {N}_{n \frac{1}{2} }$
has an $n\frac{1}{2} $-near-unanimity term
since the operation in each generating algebra is 
$n\frac{1}{2} $-near-unanimity.
Indeed, as already mentioned in the proof of
Theorem \ref{ip}(4), the term 
introduced in \eqref{term} and recalled in \eqref{term2}  
induces an $n\frac{1}{2} $-near-unanimity
operation.
Moreover, for $3 \leq j \leq \ell $, the operation on the algebras 
$\mathbf N ^{j,m,+}$ introduced in Definition \ref{+lav}(b) 
is $n\frac{1}{2} $-near-unanimity.
In fact, disregarding the second variable, the term $u _{j,m}^+$
from equation \eqref{+lte} 
is a near-unanimity term, hence equations \eqref{b2} hold.
Since  $u _{j,m}^+$ does not depend on the second variable, 
\eqref{b1} holds, as well.
Finally, \eqref{b3} holds in view of Remark \ref{op3} 
and since, again,  $u _{j,m}^+$ does not depend on the second variable.
 Notice that the assumption that $3 \leq j \leq \ell $
is used only  in order to get equation \eqref{b3}. 

We now show that the second statement
of the theorem  implies the failure of 
 $2n{-}4$-distributivity.
Indeed, since $ \alpha \gamma \circ \alpha \beta \subseteq 
 \alpha (  \gamma \circ   \beta )$, if equation \eqref{blahh}
fails, then also 
$   \alpha (  \beta \circ   \gamma ) \subseteq 
 \alpha \beta  \circ  \alpha   \gamma \circ  {\stackrel{2n-4}{\dots}}
 \circ   \alpha \gamma $ fails. \red{By the comment
in Definition \ref{ndist}, this means exactly that 
 $2n{-}4$-distributivity fails.} 

 It remains to show that 
 \eqref{blahh} fails
in $\mathcal {N}_{n \frac{1}{2} }$.
\red{Let $\mathbf A_3^+ \in \mathcal {N}_m^{3,+}$,
 $\mathbf F \subseteq 
 \mathbf A_3^+ \times \mathbf G^{{\rm nu}, n} $ and
$a, d \in A_3^+$ 
be given by Lemma \ref{maj}.
Correspondingly, }
let $\mathbf B$  be the algebra given by Lemma \ref{nua}.
Then $\mathbf  B \in \mathcal {N}_{n \frac{1}{2} }$, since 
$\mathbf  B$ is a subalgebra of 
$\mathbf A \times \mathbf A \times \mathbf A_3^+ \times 
\mathbf G^{{\rm nu}, n}$,
$\mathbf A_3^+ \in \mathcal {N}_m^{3,+}$,
$\mathcal {N}_m^{3,+}$ is a subvariety of 
 $\mathcal {N}_{n \frac{1}{2} }$,
$\mathbf G^{{\rm nu}, n} \in \mathcal {N}_{n \frac{1}{2} }$ by construction  and 
$\mathbf A$ belongs to the variety generated by 
$\mathbf G^{{\rm nu}, n}$, hence 
$\mathbf A \in \mathcal {N}_{n \frac{1}{2} }$. 

Define the following congruences on $\mathbf A$.
The congruence $\beta^*$
is the congruence induced by the partition
$\{ \{1,e  \} ,\{e', 0  \} \}$
and   
$ \gamma ^*$
is the congruence induced by the partition
$\{ \{1,e'  \} ,\{e, 0  \} \}$.
Since each of the above partitions induces a congruence on the four-element
Boolean algebra, then each partition induces a congruence on any term reduct.

\red{Since $\mathbf  B$ is a subalgebra of  $\mathbf A \times \mathbf A \times 
\mathbf F $, then every congruence
on $\mathbf A \times \mathbf A \times 
\mathbf F $ induces a congruence on $\mathbf  B$.} 
Let $\beta$ and $\gamma$ be, respectively,  the congruences
induced on $\mathbf  B$ by the congruences
$\beta^* \times \beta ^* \times \tilde \beta $
and 
$ \gamma ^* \times \gamma  ^* \times \tilde \gamma $
on $\mathbf A \times \mathbf A \times 
\mathbf F $, where $ \tilde \alpha $, $ \tilde \beta $ and $ \tilde \gamma  $
are given by   Lemma \ref{maj}.
Similarly, let $\alpha$ be induced by
$1 \times 1 \times \tilde \alpha $.
Here, as in Lemma \ref{nua},
we are identifying a triple containing a subpair
 with a quadruple.

Let $a$, $d$ and $c$  be given by Lemma \ref{maj}  
and consider the elements
$ \bar a =(1,0,a,1)$
and $\bar d = (0,1,d,1)$ in $B$
of types, respectively, I$^ \sigma$ and III$^ \sigma$.
Clearly, $ \bar{a} \mathrel { \alpha  } \bar{d}  $, recalling
from Lemma \ref{maj} that 
$(a,1) \mathrel {\tilde \alpha} (d,1) $. 
The element 
$ \bar c = (e,e', c,0)$ of type IV$^ \sigma$
witnesses that  
 $( \bar{a}, \bar{d}) \in \alpha ( \beta \circ \gamma )   $,
since  
$ \bar{a} \mathrel { \beta   } \bar{c} \mathrel { \gamma  }   \bar{d}  $.
We are going to show that 
 $( \bar{a}, \bar{d}) \notin  \alpha \beta
  \circ (  \alpha (  \gamma \circ   \beta )) ^{n-3} 
\circ \alpha \gamma   $, hence \eqref{blahh}
fails in an algebra in $\mathcal {N}_{n \frac{1}{2} }$. 

Suppose by contradiction that 
 $( \bar{a}, \bar{d}) \in  \alpha \beta
  \circ (  \alpha (  \gamma \circ   \beta )) ^{n-3} 
\circ \alpha \gamma   $,
hence there are elements 
$ \bar{g}, \bar{h} \in B$ such that
  $ \bar{a} \mathrel { \alpha  \beta } \bar{ g} $,
$( \bar{g}, \bar{h}) \in  (  \alpha (  \gamma \circ   \beta )) ^{n-3}$
and 
 $ \bar{ h} \mathrel { \alpha  \gamma  }  \bar{d}  $.
By $ \beta $-equivalence,
the first component of $ \bar{g}$ 
is either $1$ or $e$, in any case, not $0$.
By $\alpha$-equivalence, the last component of 
$\bar{g}$ is $1$, \red{since
$ \tilde \alpha $ is induced by 
$1 \times 0$ on 
$\mathbf F \subseteq  \mathbf A_3^+ \times 
\mathbf G^{{\rm nu}, n}$, by Lemma \ref{maj}. }
Since $\bar{g}$ must be in $B$, then
$\bar{g}$ has type   I$^ \sigma$,
hence its third component is $a$. 
Symmetrically, 
\red{the second component of 
$\bar{h}$ is not $0$, }
the last component of 
$\bar{h}$ is $1$,
$\bar{h}$ has type   III$^ \sigma$
and its third component is $d$. 
From $( \bar{g}, \bar{h}) \in  (  \alpha (  \gamma \circ   \beta )) ^{n-3}$
we then get
$( (a,1), (d,1)) \in  ( \tilde \alpha (  \tilde \gamma \circ   \tilde \beta )) ^{n-3}$,
contradicting Lemma \ref{maj} (recall that $m=n+1$). 
 \end{proof}

\begin{remark} \labbel{altr}
As a corollary  of Theorem \ref{thd},
we get another proof of 
Theorem  \ref{ip}(4).
Indeed,  $\mathcal {N}_{n \frac{1}{2} }$
is locally finite, being the join of locally finite varieties,
$\mathcal {N}_{n \frac{1}{2} }$ 
  has an $n\frac{1}{2} $-near-unanimity term,
but 
$\mathcal {N}_{n \frac{1}{2} }$ 
  has not an $n $-near-unanimity term,
otherwise it would be
  $2n{-}4$-distributive, by Theorem \ref{mit}. 
This contradicts Theorem \ref{thd}.
 \end{remark}

\section {Modularity levels and more identities} \labbel{mod}

\begin{proposition} \labbel{ippm}
If $n \geq 3$, then every variety $\mathcal V$ with an
$n\frac{1}{2} $-near-unanimity term
 is $2n{-}2$-modular. 
 \end{proposition}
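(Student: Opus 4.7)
My plan is to mirror the construction in Proposition \ref{ipp} and exhibit $2n-1$ explicit Day terms $d_0,d_1,\dots,d_{2n-2}$ built from the $n\frac{1}{2}$-near-unanimity term $u$. I set $d_0(x,y,z,w)=x$ and $d_{2n-2}(x,y,z,w)=w$, and for the middle terms I place a single occurrence of $y$ or $z$ at a shifting coordinate of $u$ while $w$'s accumulate on one side and $x$'s on the other. A natural first attempt is
\begin{align*}
d_{2i-1}(x,y,z,w)&= u(\underbrace{w,\dots,w}_{i+1},y,\underbrace{x,\dots,x}_{n-i}),\\
d_{2i}(x,y,z,w)&= u(\underbrace{w,\dots,w}_{i+1},z,\underbrace{x,\dots,x}_{n-i}),
\end{align*}
for appropriate ranges of $i$. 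Because each middle term contains a single occurrence of $y$ or $z$ at a position $\ge 3$, the equation $d_i(x,y,y,x)=x$ follows from \eqref{b2}, and the opening step $d_0(x,x,w,w)=d_1(x,x,w,w)=u(w,w,x,\dots,x)=x$ is exactly \eqref{b1}. The even/odd transitions in the middle of the chain are satisfied by inspection, exactly as in the proof of Proposition \ref{ipp}: consecutive terms agree on the required Day substitution because either $z$ becomes $y$ (odd) or the pair $(y,z)$ collapses to $(x,w)$ (even), in each case reproducing the same $u$-pattern.

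The delicate point, which I expect to be the main obstacle, is the closing step $d_{2n-3}\to d_{2n-2}=w$: since $2n-3$ is odd, I need $d_{2n-3}(x,y,y,w)=w$, and the naive choice above gives $u(w,\dots,w,y,x)$, which is not reducible to $w$ by the axioms. The fix, analogous to the role played by \eqref{b3} at the step $t_{2n-5}\to t_{2n-4}$ in Proposition \ref{ipp}, is to redefine the last one or two terms so that $w$ occupies position $n+2$; for example, take $d_{2n-3}=u(w,\dots,w,y,w)$, which satisfies $d_{2n-3}(x,y,y,w)=u(w,\dots,w,y,w)=w$ by \eqref{b2} applied at position $n+1$ (with the roles of $x$ and $w$ interchanged), and similarly $d_{2n-3}(x,x,w,w)=w$ by \eqref{b2}.

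To reconcile this modified endpoint with the preceding term at the even transition $d_{2n-4}\to d_{2n-3}$, I would either adjust $d_{2n-4}$ correspondingly (placing $w$ at position $n+2$ as well) or, more cleanly, use \eqref{b3} to identify the two $u$-patterns obtained from the original and modified $d_i$ after the substitution $(x,x,w,w)$. This is exactly the same saving trick that lets the Jónsson chain of Proposition \ref{ipp} have length $2n-3$ rather than $2n-2$, and here it yields one fewer Day term than what one would obtain by applying Theorem \ref{tut}(3) to the $(n+1)$-near-unanimity term derived from $u$ via Theorem \ref{ip}(1), producing the desired bound $2n-2$. Verifying that the Day equations survive this last splicing is the bulk of the work; it amounts to checking a single application of \eqref{b3} together with routine uses of \eqref{b1} and \eqref{b2}.
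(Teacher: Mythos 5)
Your construction is oriented in the wrong direction, and the proposed closing step does not mesh with the preceding transitions; this is a genuine gap, not merely a technicality.

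Concretely: you accumulate $w$'s on the left of $u$ and $x$'s on the right, sliding the single $y$/$z$ rightward from position $3$. This choice is forced to use \eqref{b1} at the opening transition $d_0\to d_1$, and then you run out of axioms at the other end. The equations \eqref{b1} and \eqref{b3} are not left--right symmetric: they single out positions $1,2,3$. There is no provable identity of the form $u(w,\dots,w,x,x)=u(w,\dots,w,x)$ collapsing the \emph{right}-hand end, so when you reach the closing term your pattern has no way to reduce $u(w,\dots,w,y,x)$ to $w$. Your fix, replacing $d_{2n-3}$ by $u(w,\dots,w,y,w)$, does satisfy $d_{2n-3}(x,y,y,w)=w$, but it breaks the even transition $d_{2n-4}(x,x,w,w)=d_{2n-3}(x,x,w,w)$: the left side (following your pattern) is $u(w^{(n)},x,x)$ while the right side is $u(w^{(n)},x,w)$, and no axiom identifies these. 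Further ad hoc adjustments to $d_{2n-4}$, $d_{2n-5}$, \dots only push the mismatch one step earlier, and \eqref{b3} never becomes applicable because it collapses arguments on the left, not the right. The error cascades all the way back.

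The paper's proof avoids this by choosing the mirror orientation: $x$'s on the left, $z$'s (the fourth Day variable) accumulating on the right, with the single variable ($y$ or $w$) sliding \emph{leftward} from position $n+1$ down to position $4$. Crucially, the last nontrivial Day term is not of ``single-occurrence'' type at all; it is $t_{2n-3}(x,y,w,z)=u(y,w,z,\dots,z)$, containing \emph{both} middle Day variables $y$ and $w$ at positions $1,2$. Then $t_{2n-3}(x,y,y,x)=x$ and $t_{2n-3}(x,y,y,w)=w$ both follow from \eqref{b1}, and the transition $t_{2n-4}(x,x,w,w)=u(x,x,x,w,\dots,w)=u(x,w,\dots,w)=t_{2n-3}(x,x,w,w)$ is exactly an instance of \eqref{b3}. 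Your insistence that every middle term have a single occurrence of $y$ or $z$ is precisely what prevents you from reaching this term shape, and reversing the orientation of the accumulation is what forecloses the use of \eqref{b3} at the point where you need it.
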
  

\begin{proof}
If $u$ is an $n\frac{1}{2} $-near-unanimity term,  define
\begin{equation}\labbel{lungh}      
\begin{aligned}
t_0(x,y,w,z) &= x,
\\
t_1(x,y,w,z) &= u(x,x, x, x,x, x, \dots,x, x,x, y, z),
\\
t_2(x,y,w,z) &= u(x,x, x, x, x, x,\dots, x, x,x, w, z),
\\
t_3(x,y,w,z) &= u(x,x, x, x, x, x,\dots, x, x,y, z, z),
\\
t_4(x,y,w,z) &= u(x,x, x, x, x, x,\dots, x, x,w, z, z),
\\
t_5(x,y,w,z) &= u(x,x, x, x,x, x, \dots, x, y,z, z, z),
\\
&\dots
\\
t_{2n-7}(x,y,w,z) &= u(x,x, x, x,y, z, \dots, z, z,z, z, z), 
\\
t_{2n-6}(x,y,w,z) &= u(x,x, x, x,w, z, \dots, z, z,z, z, z), 
\\
t_{2n-5}(x,y,w,z) &= u(x,x, x, y,z, z, \dots, z, z,z, z, z), 
\\
t_{2n-4}(x,y,w,z) &= u(x,x, x, w,z, z, \dots, z, z,z, z, z), 
\\
t_{2n-3}(x,y,w,z) &= u(y,w, z, z,z, z, \dots, z, z,z, z, z), 
\\
t_{2n-2}(x,y,w,z)&= z.
 \end{aligned}
\end{equation} 
\red{The terms $t_{0}, \dots, t_{2n-2}$ 
satisfy the equations in Theorem \ref{daythm}(2), hence $\mathcal V$
is $2n{-}2$-modular.} 
 \end{proof} 

Except for the penultimate line, the 
construction of the terms in \eqref{lungh}  is identical 
with \cite[Theorem 3.19]{S}. From another point of view, 
the proof of  Proposition \ref{ippm} exploits the fact
mentioned in Remark \ref{mki}(b) 
that a variety with an  $n \frac{1}{2} $-near-unanimity term 
has directed Gumm terms;  then classical arguments from
\cite{D,G,LTT} can be used in order to get
Day  terms  from a sequence of ternary terms. 

\red{Recall that item (1) in Theorem \ref{tut} has been proved 
at the end of Section \ref{bsec}. }
We can now complete the proof.

 \begin{proof}[Proof of Theorem \ref{tut} (continued)]
(2) When $h$ is an integer, the first statement
in (2) is Mitschke's Theorem \cite[Theorem 2]{Mi},
reported here in Theorem \ref{mit}.
When $h$ is a half-integer,
the first statement in (2) is
Proposition \ref{ipp}. 
Indeed, in the latter case $n= h - \frac{1}{2} $, hence 
$2n-3=2h-4$.    

If $h$ is an integer, the last statement in (2) 
is Theorem 3.6(1) in \cite{mis}.    
If $h$ is a half-integer,  
then a counterexample is the variety  
$\mathcal {N}_{n \frac{1}{2} }$ introduced in Definition \ref{+lav}(d)(f),
as shown in Theorem \ref{thd}.

(3) 
When $h$ is an integer the first part
follows from  \cite[Theorem 3.19]{S}. When $h$ is a half-integer it
follows from   
Proposition \ref{ippm}.

As for the second part, if $h$ is an integer,
this follows from  \cite[Theorem 3.6(3)]{mis}.
\qedhere$_{\text{ to be continued}}$\end{proof}
 
\red{In order to deal with the remaining case,
 we need the $3$-dimensional
analogue of  Lemma \ref{maj}.

\begin{lemma} \labbel{maj3}
Let $n \geq 4$ and
$m=n+1$.
Then there are  
an algebra $\mathbf A_3^+ \in \mathcal {N}_m^{3,+}$
and a subalgebra $\mathbf F$ of 
$ \mathbf A_3^+ \times 
\mathbf G^{{\rm nu}, n}$ 
such that the congruence identity
\begin{equation}\labbel{modblah}
\tilde \alpha ( \tilde\beta \circ \tilde \alpha \tilde \gamma \circ \tilde\beta )
 \subseteq 
(\tilde \alpha (\tilde \gamma \circ  \tilde \beta \circ \tilde \gamma) )^{m-4} 
   \end{equation}    
fails in $\mathbf F$.

Moreover, the failure of 
\eqref{modblah} 
can be witnessed 
by elements 
$(a,1)$, $(d,1)$, $(c_1,0)$, $(c_2,0)$ 
and congruences $ \tilde \alpha $,
induced by 
$ 1 \times 0$, $\tilde \beta$, $\tilde \gamma$ 
of $\mathbf F$
such that   
$(a,1) \mathrel { \tilde \alpha  } (d,1)$,
$(a,1) \mathrel { \tilde \beta  } (c_1,0) 
\mathrel { \tilde \alpha \tilde \gamma  } (c_2,0) 
\mathrel { \tilde \beta  }  (d,1)$
and
$((a,1),(d,1)) \notin 
(\tilde \alpha (\tilde \gamma \circ  \tilde \beta \circ \tilde \gamma) )^{m-4}  $.
 \end{lemma}

\begin{proof}
This corresponds to the case 
$j=3$ and $q=3$ in the Claim in \cite{mis}
and is proved
using the remarks in the proof of Lemma \ref{maj} here.    
Notice that we have not indicated the dependency
on $q$ in \cite{mis}, thus the algebra
 $\mathbf A_3^+$ here is not the same algebra as in Lemma \ref{maj}.
\end{proof}}

 \begin{proof}[Proof of Theorem \ref{tut} (continued)]
\red{The remaining case to be proved
is the last statement in (3) when 
$h >4$ is a half integer. 
So let $h=n  \frac{1}{2} $, $n \geq 4$
and $m=n+1=h+ \frac{1}{2}$. 
Consider the algebras and elements given by
Lemma \ref{maj3}.
Since
\eqref{modblah} 
fails in $\mathbf F$, then
\eqref{modblah} fails when we consider 
 $\tilde \alpha \tilde \gamma$ in place of $\tilde \gamma$.
Since   $\tilde \alpha (\tilde \alpha \tilde \gamma \circ  
\tilde \beta \circ \tilde \alpha \tilde \gamma) =
\tilde \alpha \tilde \gamma \circ  \tilde \alpha
\tilde \beta \circ \tilde \alpha \tilde \gamma$,
we get that 
\begin{equation}\labbel{modblahk}
\tilde \alpha ( \tilde\beta \circ \tilde \alpha \tilde \gamma \circ \tilde\beta )
 \subseteq 
\tilde \alpha \tilde \gamma \circ \tilde \alpha \tilde \beta  
\circ  {\stackrel{2m-7}{\dots}} \circ \tilde \alpha \tilde \gamma   
   \end{equation}    
fails in $\mathbf F$. We have used the fact that
$m-5$ pairs of factors of the form $\tilde \alpha \tilde \gamma$ mutually absorb,
when computing $(\tilde \alpha \tilde \gamma \circ  \tilde \alpha
\tilde \beta \circ \tilde \alpha \tilde \gamma)^{m-4}$. 

Now the proof of 
 Theorem \ref{thd} carries over with no essential
 modification
in order to show that
\begin{align}\labbel{modblahh}
  \alpha (  \beta \circ   \alpha   \gamma \circ  \beta )
& \subseteq 
\alpha \beta \circ (  \alpha   \gamma \circ   \alpha   \beta  
\circ  {\stackrel{2m- 7}{\dots}} \circ   \alpha   \gamma )
\circ \alpha \beta, \quad \text{that is,}
\\ 
\labbel{modblahhh}
  \alpha (  \beta \circ   \alpha   \gamma \circ  \beta )
 &\subseteq 
\alpha \beta \circ  \alpha   \gamma   
\circ  {\stackrel{2m- 5}{\dots}} 
\circ \alpha \beta
   \end{align}    
 fail in $\mathcal {N}_{n \frac{1}{2} }$.

In more detail, 
let $\mathbf A_3^+ \in \mathcal {N}_m^{3,+}$,
 $\mathbf F \subseteq 
 \mathbf A_3^+ \times \mathbf G^{{\rm nu}, n} $ and
$a, d \in A_3^+$ 
be given by Lemma \ref{maj3} and
let $\mathbf B$  be the
subalgebra  of $\mathbf A \times \mathbf A \times 
 \mathbf A_3^+ \times \mathbf G^{{\rm nu}, n}$
  given by Lemma \ref{nua}.
Let $\alpha$, $\beta$, $\gamma$, 
$ \bar a =(1,0,a,1)$
and $\bar d = (0,1,d,1)$
 be defined as in the
proof of \ref{thd}. The elements $\bar c_1=(e,0,c_1,0)$ and 
$\bar c_2=(0,e,c_2,0)$
witness that $(\bar a, \bar d) \in \alpha (\beta \circ \alpha \gamma \circ \beta) $.
We claim that, on the other hand, 
$(\bar a, \bar d)$ does not belong to 
the right-hand side of \eqref{modblahh}.
In comparison with the proof of Theorem \ref{thd},
 here we need to add a factor of the form
 $\alpha \beta $, rather than $\alpha \gamma $, at the 
right outer edge of \eqref{modblahk},
as in \eqref{modblahh}, compare \eqref{blahh}. 
This involves no change in 
the proof. 
Indeed, since 
$\bar d = (0,1,d,1)$, if
 $ \bar{ h} \mathrel { \alpha  \beta   }  \bar{d}  $,
then 
the second component of $ \bar{h}$ 
is distinct from $0$ 
by $ \beta $-equivalence.
The last component of $ \bar{h}$ 
is $1$ 
by $\alpha$-equivalence, hence 
$ \bar{h}$  has type 
   III$^ \sigma$,
thus it has  $d$ 
as the  third component.
All the rest is identical to the proof of \ref{thd}.

Since   $m=h+ \frac{1}{2} $, then  
$2m- 5 =2h-4$, hence the failure of \eqref{modblahhh}
shows that $2h{-}4$-modularity fails in  
$\mathcal {N}_{n \frac{1}{2}}$, in virtue of the comment after
Theorem \ref{daythm}.}
\end{proof}

\begin{remark} \labbel{more}
The above arguments together with the proof of 
Theorem 3.6(4) in \cite{mis} show that if $n \geq 4$,
then   the following congruence identity fails in $\mathcal {N}_{n \frac{1}{2} }$
\begin{equation}\labbel{blbl}
\alpha (\beta \circ ( \alpha \gamma \circ \alpha \beta 
\circ {\stackrel{q-2}{\dots}} \circ \alpha \beta ^\bullet   )
\circ   \gamma ^ \bullet) 
\subseteq
\alpha \beta \circ 
(\alpha ( \gamma \circ \beta \circ {\stackrel{q}{\dots}}
 \circ  \beta ^ \bullet  )) ^{n- \red{3}
}
\circ \alpha \gamma ^ \bullet ,  
   \end{equation}    
for every $q \geq 2$,
 where 
$\beta^ \bullet = \beta $,
$ \gamma ^ \bullet = \gamma  $
if $q$ is even and 
$\beta^ \bullet = \gamma $,
$ \gamma ^ \bullet = \beta   $
if $q$ is odd.
 \end{remark}

\section{Some variations} \labbel{var} 

\begin{remark} \labbel{dueemm}
The notion of an $n\frac{1}{2} $-near-unanimity term
makes sense for $n=2$; in this case we get a
 $\frac{5}{2} $-near-unanimity term. Such   a term is required to satisfy
\begin{equation}    
\begin{aligned} \labbel{dueemmeq} 
u(z,z, x, x) &= x,  & u(x,z, x, x) &= x, 
& u(x, x,z, x) &= x, 
\\
  u(x, x, x,z) &= x,  
& u(x, z, z, z) &= x,
 \end{aligned}   
  \end{equation}
where the last equation follows from the fact that in the case $n=2$
\eqref{b3} reads $u(x, x, x, z) = u(x, z, z, z)$.

We shall show that the existence of a $\frac{5}{2} $-near-unanimity term
is equivalent to the existence of a Pixley term.
Indeed, if we let $t(x,y,z)= u(x,y,z,z)$, then 
$t$ is a Pixley term witnessing arithmeticity.
\red{Notice that we do not need the third and fourth equations
in \eqref{dueemmeq}. }
Conversely, if $t$ is a Pixley term, then
$u(x,y,z,w)= t(x,t(y,z,w), w)$    is a 
 $\frac{5}{2} $-near-unanimity term.
In this respect, the proof of Theorem \ref{ip}(1)
generalizes the fact that if  $t$ is a Pixley term, then
$s(x,z,w)= t(x,t(x,z,w), w)$ is a majority term.

The existence of a $2$-near-unanimity term 
might be interpreted as a condition implying that we are in a trivial 
variety; in this sense, a  $\frac{5}{2} $-near-unanimity term
is a condition strictly between a $2$-near-unanimity term
and a $3$-near-unanimity term, extending Theorem \ref{ip}. 
On the other hand, a  $\frac{5}{2} $-near-unanimity term,
equivalently, a Pixley term, does not imply
$2n-4=1$-distributivity (= being a trivial variety), hence the assumption
$n \geq 3$ is necessary in Proposition \ref{ipp}.
 \end{remark}

\begin{remark} \labbel{piuid} 
The term $u$ defined by equation
\eqref{term} for $n \geq 3$  in the proof of Theorem \ref{ip}(4)
satisfies many more equations in Boolean algebras, besides the equations 
\eqref{b1}-\eqref{b3}  defining an 
$n\frac{1}{2} $-near-unanimity term.
For example, $u$ satisfies 
\begin{align} 
\labbel{uu1}
u(x,z,x, \dots, x,\underset{i}{y},x, \dots, x) &= x, 
\\
\labbel{uu2}   
u(x,y,x_3,x_4, \dots,\underset{i}{x}, \dots, x_{n+2}) &=
u(x,z,x_3,x_4, \dots, \underset{i}{x}, \dots, x_{n+2}),
\\
\labbel{uu3}
u(x,x,z,z, \dots, z,\underset{i}{x},z, \dots, z)& = 
 u(x,z,z,z, \dots, z,z,z, \dots, z),
\\
\labbel{uu4}   
u(x_1,x_2,x_3,x_4, \dots, x_{n+2})&=
u(x_1,x_2,x_{ \tau (3)}, x_{ \tau (4)},\dots, x_{ \tau (n+2)}),
 \end{align}   
where in \eqref{uu1} - \eqref{uu3} $i$ varies with  $3 \leq i \leq n+2$
and in \eqref{uu4}  
 $\tau$ is a permutation of the set $\{ 3, 4, \dots, n+2\}$.
\end{remark}

\begin{corollary} \labbel{inpiuid}
Theorems \ref{tut} and  \ref{ip} 
  hold if in the definition of an $n\frac{1}{2} $-near-unanimity term 
we add some or all of the equations \eqref{uu1} - \eqref{uu3}.

Theorem \ref{thd}
  holds if in the definition of an $n\frac{1}{2} $-near-unanimity term 
we add some or all of the equations \eqref{uu1} - \eqref{uu4}.

If some variety $\mathcal V$ has a 
symmetric $n$-near-unanimity term,
then $\mathcal V$ has  
an $n\frac{1}{2} $-near-unanimity term 
satisfying \eqref{uu1} - \eqref{uu4}.
 \end{corollary}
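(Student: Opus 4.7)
The plan is to handle the corollary's three independent claims in turn. For the first two claims---that Theorems \ref{tut}, \ref{ip}, \ref{thd} still hold when some or all of \eqref{uu1}--\eqref{uu3} (respectively \eqref{uu4}) are added to the definition of an $n\frac{1}{2}$-NU term---the structural observation is that the positive directions of these theorems (``$n\frac{1}{2}$-NU implies \dots'') use only \eqref{b1}--\eqref{b3}, so strengthening the hypothesis leaves those proofs untouched. The real work lies in (i) verifying that all the counterexample varieties satisfy the strengthened definition, and (ii) re-examining the implication ``$n$-NU $\Rightarrow$ $n\frac{1}{2}$-NU'' appearing in Theorems \ref{tut}(1) and \ref{ip}(3).

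For (i), every counterexample in the paper is generated by operations of two kinds: the Boolean term $u$ from equation \eqref{term}, for which \eqref{uu1}--\eqref{uu4} are explicitly listed in Remark \ref{piuid}; and the lattice term $u^+_{j,m}$ from Definition \ref{+lav}(a), which is visibly independent of $x_2$ (giving \eqref{uu2}) and symmetric in $\{x_1, x_3, x_4, \dots, x_{m+1}\}$ (giving \eqref{uu4}). Equations \eqref{uu1} and \eqref{uu3} for $u^+_{j,m}$ reduce to short case analyses in the two-element lattice, mirroring the verification of \eqref{b1}--\eqref{b3} carried out in the proof of Theorem \ref{thd}; the restriction $j \geq 3$ built into the definition of $\mathcal{N}^{3,+}_m$ is exactly what makes the \eqref{uu3} verification go through. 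The $\mathcal{N}_{n+1}$ counterexample for \ref{ip}(2) already lacks any $n\frac{1}{2}$-NU term, hence lacks one satisfying any strengthened definition.

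For (ii), I would replace the ``two initial dummies'' construction of Theorem \ref{ip}(3) by the variant $u(x_1, x_2, x_3, \dots, x_{n+2}) = w(x_1, x_4, x_5, \dots, x_{n+2})$, in which $x_2$ and $x_3$ are the dummy positions. Equations \eqref{b1}--\eqref{b3} are checked as before, and \eqref{uu1}, \eqref{uu2} are automatic from the dummy structure; the $i = 3$ instance of \eqref{uu3} coincides with \eqref{b3}. The $i \geq 4$ instances of \eqref{uu3} do not follow from $n$-NU alone---this is the one point where ``some or all'' cannot be read as ``all'' in a single uniform construction---and those remaining instances are exactly what the third claim handles under the added assumption of symmetry.

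For the third claim, take $u(x_1, x_2, x_3, \dots, x_{n+2}) = w(x_3, x_4, \dots, x_{n+2})$. The pivotal identity is $w(x, z, z, \dots, z) = z$, obtained by symmetry (which rewrites the left-hand side as $w(z, \dots, z, x)$) followed by NU with $x$ as the outlier. This single identity supplies \eqref{b3} and the $i \geq 4$ cases of \eqref{uu3}, while \eqref{b1}, \eqref{b2}, \eqref{uu1}, \eqref{uu2}, \eqref{uu4} are immediate from the dummy nature of $x_1, x_2$, the NU axioms for $w$, and the symmetry of $w$. The hard part of the whole corollary is isolating this identity; once it is in hand, the rest is routine substitution.
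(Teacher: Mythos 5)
Your overall plan is right—verify the counterexamples satisfy the strengthened definition and re-examine the implication ``$n$-NU $\Rightarrow n\frac{1}{2}$-NU''—and parts (i) and the handling of $\mathcal{N}_{n+1}$ are sound and essentially match the paper's argument (the paper does the $u^+_{j,m}$ verification via Remark \ref{op3}, which is your case analysis packaged as an absorption property). But your treatment of (ii) contains a real error, and it propagates into a false conclusion.

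You replace the paper's construction $u(x_1,\dots,x_{n+2}) = w(x_3,\dots,x_{n+2})$ by the variant $u(x_1,\dots,x_{n+2}) = w(x_1,x_4,\dots,x_{n+2})$, making $x_1$ an active variable, and then correctly observe that \emph{your} $u$ fails the $i\ge 4$ instances of \eqref{uu3} (since $w(x,z,\dots,z,x,z,\dots,z)$ has two outliers and NU gives no control). You then conclude that ``some or all cannot be read as all in a single uniform construction.'' This conclusion is wrong: the paper's original two-initial-dummies construction \emph{does} satisfy all of \eqref{uu1}--\eqref{uu3}. For \eqref{uu3}, the left side becomes $w$ applied to a string with exactly one $x$ among $z$'s (the $x$ in positions $1,2$ of $u$ is discarded), so by near-unanimity both sides evaluate to $z$. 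There was no reason to change the construction, and changing it is what manufactured the apparent obstruction. Relatedly, your ``pivotal identity'' $w(x,z,\dots,z)=z$ does not require symmetry at all---it is a near-unanimity equation with the outlier in the first slot. Symmetry of $w$ is used only for \eqref{uu4}, which is precisely the extra equation the third claim addresses; it is not needed to rescue any instance of \eqref{uu3}. So the gap is: you chose a strictly worse instantiation, misdiagnosed a failure of \eqref{uu3} that does not occur in the paper's construction, and then invoked symmetry in the wrong place.
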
 

\begin{proof} 
\red{Since Clauses (1) and (2) in Theorem \ref{ip} hold
for $n\frac{1}{2} $-near-unanimity terms,
these clauses still hold if we replace ``$n\frac{1}{2} $-near-unanimity term'' with
a stronger notion.
The argument in the proof of \ref{ip}(3)
carries over for \eqref{uu1} - \eqref{uu3}, since 
the first two variables are dummy.
The term $u$ defined by equation \eqref{term}
satisfies \eqref{uu1} - \eqref{uu4}, hence the proof
of \ref{ip}(4) carries over, too.  
Thus Theorem \ref{ip} 
  holds for the stronger notions when
we add some  equations from \eqref{uu1} - \eqref{uu3}.

In order to prove that 
Theorem \ref{thd}
  holds in the generalized setting, we just check 
that \eqref{uu1} - \eqref{uu4} are satisfied by the operation of
 $\mathcal {N}_{n \frac{1}{2} }$.  
It suffices to show that this holds for all the algebras generating 
$\mathcal {N}_{n \frac{1}{2} }$. }
We have just mentioned that the operation in  the algebra $\mathbf G^{{\rm nu}, n}$
introduced in Definition \ref{+lav}(c)  satisfies 
 \eqref{uu1} - \eqref{uu4}.
The term $u _{j,m}^+$ introduced in Definition \ref{+lav} 
does not depend on the second variable, hence satisfies 
 \eqref{uu2}. Moreover, $u _{j,m}^+$ 
 is near-unanimity 
with regard to the remaining variables, hence 
it satisfies \eqref{uu1}. 
It satisfies 
 \eqref{uu4} 
since it is symmetric, disregarding the second variable.
If $m \geq 5 $ and $m \geq j+2$, then 
$u _{j,m}^+$ satisfies \eqref{uu3}, since both members 
evaluate as $z$, by Remark \ref{op3}.   
In conclusion, 
all the algebras generating 
$\mathcal {N}_{n \frac{1}{2} }$ satisfy
\eqref{uu1} - \eqref{uu4}, hence such equations hold in
 $\mathcal {N}_{n \frac{1}{2} }$.  \red{This proves
the generalized version of Theorem \ref{thd}, then the general 
version of Theorem \ref{tut} follows, noticing that
in the final part of the proof of \ref{tut}(3) we use again
the variety $\mathcal {N}_{n \frac{1}{2}}$.}

Finally, if $w$ is a symmetric $n$-near-unanimity term,
then, adding two dummy variables at the first two places,   
equation \eqref{uu4} holds. Equations \eqref{uu1} - \eqref{uu3}
have been already taken care of. Thus  the last statement holds. 
\end{proof}  

In particular, the equations \eqref{b1} - \eqref{b3}, \eqref{uu1} - \eqref{uu4} 
together neither imply
$2n{-}4$-distributivity, nor 
imply the existence of an 
$n$-near-unanimity term.

\section{Some problems}\labbel{prob}  

We are not claiming that the  problems below are difficult.

\begin{problem} \labbel{wnum}
\red{Study the  notion of a 
\emph{weak $n\frac{1}{2} $-near-unanimity term},
that is, an idempotent $n{+}2$-ary  term satisfying
 \eqref{b3} from Definition \ref{12}, as well as
\begin{align*} 
u(z,z, x, x, \dots, x) &= u(x, \dots, x,\underset{i}{z},x, \dots, x),
 && \text{for $2 \leq i \leq n+2$.} 
 \end{align*}
Compare \cite{MM}.
One might possibly add some equations of the form
\eqref{uu2} - \eqref{uu4} and
\begin{align*} 
u(z,z, x, x, \dots, x) &= u(x, z, x \dots, x,\underset{i}{y},x, \dots, x),
 && \text{for $3 \leq i \leq n+2$} 
 \end{align*}
(compare \eqref{uu1}).}
 \end{problem}

Edge terms are an important generalization
of near-unanimity terms and, possibly
in equivalent formulations, play a chief role
in many computational problems, e.~g. 
\cite{AM,AMM,Bar2,BIM,BMS,IMM,KZ,KKS,KS,KS2} and further
references in the quoted papers.
A variety with an edge term is congruence modular \cite[Theorem 4.2]{BIM};
moreover, for $k \geq 3$, a congruence distributive variety $\mathcal V$  
has a $k$-edge term if and only if  
$\mathcal V$ has a $k$-near-unanimity term \cite[Theorem 4.4]{BIM}.
Henceforth the following problem suggests itself naturally.

\begin{problem} \labbel{edge} 
Is there a notion strictly  between a
$k$- and a $k{+}1$-edge term?

A variety with a 
$k$-edge term is $2k{-}3$-modular: 
\red{just forget about the last term
in the proof of \cite[Theorem 4.2]{BIM}, or \cite[Proposition 5.3]{B}.}
Hence a candidate for a ``$k \frac{1}{2} $-edge term''  
should imply 
 $2k{-}2$-modularity but not
 $2k{-}3$-modularity.
 \end{problem}

In  connection with Problem \ref{edge}, it is probably
interesting to study the following notion weaker than 
an $n \frac{1}{2} $-near-unanimity term. 

\begin{definition} \labbel{pseudo}
For $n  \geq 1$, a  \emph{skew-edge term} is an
$n{+}2$-ary term 
satisfying equations \eqref{b1}, \eqref{b3}
from Definition \ref{12}, as well as
equations \eqref{b2} for $4 \leq i \leq n+2$.
 \end{definition}   

An equivalent characterization of varieties 
with a  skew-edge term is given in Remark 
\ref{altrr} below. 
A skew-edge term implies congruence modularity,
but does not imply congruence distributivity.

\begin{proposition} \labbel{pse}
If $n \geq 3$, then every variety
with an $n{+}2$-ary  skew-edge term
  is $2n{-}2$-modular. 

Every congruence permutable variety 
has a skew-edge term.
 \end{proposition}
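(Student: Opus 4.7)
The plan has two independent components, both of which should be quite tractable.

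For the modularity claim, my strategy is to re-inspect the explicit Day sequence $t_0, t_1, \dots, t_{2n-2}$ written down in the proof of Proposition~\ref{ippm} and verify that it remains a valid Day sequence when only the axioms of a skew-edge term are available, namely \eqref{b1}, \eqref{b3}, and the cases $4 \leq i \leq n+2$ of \eqref{b2}. Three families of identities must be checked: (i) $t_i(x,y,y,x) = x$ for all $i$; (ii) the even-to-odd transitions $t_i(x,x,w,w) = t_{i+1}(x,x,w,w)$; (iii) the odd-to-even transitions $t_i(x,y,y,w) = t_{i+1}(x,y,y,w)$. For (i), after substitution the single surviving $y$ in each ``middle'' term $t_1, t_2, \dots, t_{2n-4}$ lands in some coordinate position within $\{4, 5, \dots, n+1\}$, so \eqref{b2} applies in the admissible range; the penultimate term satisfies $t_{2n-3}(x,y,y,x) = u(y,y,x,\dots,x)$, handled by \eqref{b1}; the endpoints are trivial. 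For (ii)--(iii) the interior transitions collapse to tautologies after substitution; the initial step $t_0 \to t_1$ uses \eqref{b2} at $i = n+2$; the large jump $t_{2n-4} \to t_{2n-3}$ is exactly equation \eqref{b3}; and the final step $t_{2n-3} \to t_{2n-2}$ is exactly \eqref{b1}. Thus the missing cases $i = 2, 3$ of \eqref{b2} are never invoked.

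For the second statement, I would take a Maltsev term $m(x,y,z)$ for the given congruence permutable variety and simply set
\begin{equation*}
u(x_1, x_2, x_3, x_4, \dots, x_{n+2}) := m(x_1, x_2, x_3),
\end{equation*}
treating coordinates $4, 5, \dots, n+2$ as dummies. Then \eqref{b1} reads $m(z,z,x) = x$, equation \eqref{b2} for $i \geq 4$ reads $m(x,x,x) = x$, and both sides of \eqref{b3} reduce to $m$-expressions evaluating to $x$. All three are immediate consequences of the Maltsev identities.

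The main care point is the bookkeeping for small $n$ in the first part: for $n = 3$ the Day sequence has only five terms, with $t_3 = u(y,w,z,z,z)$ jumping straight from coordinate position $4$ (in $t_2$) to the leading two positions, so one needs to double-check that the position of the single $y$ appearing in the verifications of $t_i(x,y,y,x) = x$ never falls into the forbidden range $\{2, 3\}$. A direct enumeration should settle this uniformly for all $n \geq 3$. As a side remark, the two excluded cases $i = 2, 3$ of \eqref{b2} correspond, under the Maltsev construction, to the Pixley identities $m(x,z,x) = x$ and $m(x,x,z) = x$, which are not implied by permutability; this explains why ``skew-edge'' is the natural Maltsev-level weakening of an $n\frac{1}{2}$-near-unanimity term.
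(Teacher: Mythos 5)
Your proof is correct and follows the same two-step path as the paper: the paper's own proof simply observes that only the skew-edge equations \eqref{b1}, \eqref{b3}, and \eqref{b2} for $i \geq 4$ are invoked in the Day sequence of Proposition~\ref{ippm}, and that a Maltsev term is precisely a $3$-ary skew-edge term, to which dummy variables can be appended; your re-inspection of the Day sequence and your explicit padded-Maltsev construction fill in exactly these details. One small correction to your closing side remark: while $m(x,z,x)=x$ does correspond to the extra Pixley identity, $m(x,x,z)=x$ is not a Pixley identity but rather contradicts the Maltsev identity $m(x,x,z)=z$ in any nontrivial variety, so only the exclusion of $i=2$ has a Pixley interpretation.
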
 

 \begin{proof}
Only the equations satisfied by a  skew-edge term
are used in the proof of Proposition \ref{ippm}.
Compare also Remark \ref{mki}(a)(b).  

A $3$-ary term is a Maltsev term if and only if
it is a skew-edge term. In any case, adding dummy 
variables after the third, we get a    skew-edge term
of arbitrary arity.
 \end{proof}

\begin{problem} \labbel{3emmod}
Is there a variety $\mathcal V$ with a $3 \frac{1}{2} $-near-unanimity term 
and such that $\mathcal V$ is not $3$-modular?  
 \end{problem}  

\begin{problem} \labbel{menovar}
Is there a notion equivalent
(for varieties) to the existence of an 
$n\frac{1}{2} $-near-unanimity term
and whose definition  involves a term (or, anyway, a set
of terms)
of smaller arity?
(that is, of arity $<n+2$)

Is there a notion 
strictly between
an $n$-near-unanimity term  
and an $n{+}1$-near-unanimity term,  
 satisfying the analogue of Theorem \ref{tut} 
and whose definition  involves a single  term 
of  arity $n+1$?
 \end{problem}  

\begin{remark} \labbel{cort}
For $n \geq 3$, the condition 

($\diamondsuit_n$) there is an $n{+}1$-ary
near-unanimity term + $2n{-}3$-distributivity

\noindent
involves only terms of arity $\leq n+1$.
\red{The condition $\diamondsuit_n$ follows from the existence 
of an $n \frac{1}{2} $-near-unanimity term, by 
Proposition \ref{ipp} and Theorem \ref{ip}(1).
Thus, by Theorem \ref{ip}(4),  $\diamondsuit_n$ }
does not imply the existence of an $n$-near-unanimity term.
The condition $\diamondsuit_n$ does not imply
$2n{-}4$-distributivity, by 
Theorem \ref{thd}. 

Is there a $2n{-}3$-distributive variety 
with an $n{+}1$-near-unanimity term 
but without an $n \frac{1}{2} $-near-unanimity term? 

A $3$-distributive variety 
with an $n{+}1$-near-unanimity term 
but without an $n$-near-unanimity term
has been constructed in \cite[Proposition 4.4]{mis}.
\end{remark}

\begin{remark} \labbel{altrr}
For $n \geq 3$ and $\mathcal V$ a variety, 
the following conditions are equivalent. 
\begin{enumerate}[(i)]
   \item   
$\mathcal V$ has an $n{+}2$-ary term $u$  satisfying
\eqref{b1}, \eqref{b3} and \eqref{b2} for 
[$i=2$ and] $4 \leq i \leq n+2$,     
\item
$\mathcal V$ has 
an $n$-ary term $v$ and a ternary term $t$ such that 
\begin{equation}\labbel{altrreq}
\begin{aligned} 
&v(x, \dots, x,\underset{i}{z},x, \dots, x) = x, \qquad \text{for $2 \leq i \leq n$,}
\\ 
& v(x,z,z,z, \dots, z,z)= t(x,z,z)
\\
 & t(x,x,z)= z, \qquad [t(x,y,x)= x]
\end{aligned}
    \end{equation}      
 \end{enumerate} 

Indeed, if $u$ satisfies (i),
then $v(x_1, x_2, x_3, \dots)= u(x_1,x_1,x_1, \allowbreak  x_2, x_3, \dots)$
and  $t(x,y,z)= u(x,y,z,z,z, \dots )$ satisfy \eqref{altrreq}.
Conversely, if $v$ and $t$ are given by \eqref{altrreq}, then  
$u(x_1, x_2, x_3, x_4,\dots)= t(x_1,x_2,v(x_3, x_4,\dots))$ 
satisfies (i).

As we mentioned in Remark \ref{mki}(a), the case 
$i=3$ in equation \eqref{b2} is not necessary in order to 
get $2n{-}3$-distributivity.
As we mentioned in the proof of 
Proposition \ref{pse}, the cases 
$i=2 $ and  $i=3$ in  \eqref{b2} are not necessary in order to 
get $2n{-}2$-modularity.
However, the case $i=3$ seems necessary in order to get
that the existence of an $n \frac{1}{2} $-near-unanimity term 
implies the existence of  an $n{+}1$-near-unanimity term.   
\end{remark}

In \cite{CCV}
Campanella, Conley,  Valeriote 
proved that if $\mathcal V$ and $\mathcal W$ 
are idempotent varieties of the same type and with, respectively, 
an $n$-near-unanimity term
and an $m$-near-unanimity term,
then both the join and the Maltsev product \cite{FMMa}  of
$\mathcal V$ and $\mathcal W$ have an
$n{+}m{-}1$-near-unanimity term.
Moreover, they show by a counterexample that the result is
the best possible. 

\begin{problem} \labbel{campan}
Do the results from \cite{CCV}
hold also when one or both $n$ and  $m$ 
  are half-integers? Possibly, one needs to modify the definition
of a $n \frac{1}{2} $-near-unanimity term as in 
Remark \ref{piuid} and Corollary \ref{inpiuid}.  
 \end{problem}   

\begin{remark} \labbel{notsym}   
\red{Let $n \geq 3$ and $\mathcal V$ be the variety with a single
$n$-ary operation $u$ satisfying the near-unanimity equations 
and no other equation (except, of course, for those equations
which logically follow from the near-unanimity rule).
We claim that $\mathcal V$ has no symmetric near-unanimity term,
actually, no symmetric term of arity $\geq 2$. 

Indeed, every  term in $\mathcal {V}$ has a normal form, obtained
by applying the near-unanimity rule whenever possible
(in particular, this shows that $\mathcal V$ is not locally finite).
Suppose by contradiction that $\mathcal V$ 
has an $m$-ary  symmetric term
$s(x_1, \dots , x_m)$ for some $m \geq 2$ 
and choose such an $s$  in normal form
 of minimal complexity.
Since $s$ is symmetric and $m \geq 2$,
 then $s$ cannot be a variable,
hence $s$  is  written  as
\begin{equation*}   
s(x_1, \dots , x_m) = u(t_1(x_1, \dots , x_m), \dots, t_n(x_1, \dots , x_m) ),
 \end{equation*} 
for certain terms $t_1, t_2, \dots, t_n$.
Since $s$ is symmetric,  we  have
\begin{equation*}   
u(t_1(x_1, \dots , x_m), \dots) = s(x_1, \dots , x_m) = 
s(x_ { \sigma 1}, \dots , x_{ \sigma m}) =
u(t_1(x_ { \sigma 1}, \dots , x_ { \sigma m}), \dots )
\end{equation*}   
for every permutation $\sigma$ of $\{1, 2, \dots, m\}$, 
hence 
\begin{equation*} 
t_1(x_1, \dots , x_m) = t_1(x_ { \sigma 1}, \dots , x_ { \sigma m}),
\end{equation*}    
for every $\sigma$,  since we are dealing with normal forms.
Thus $t_1$ is symmetric of complexity less
than $s$, a contradiction.} 
 \end{remark}   

\begin{problem} \labbel{lfsym}
Prove or disprove.
A locally finite variety with a  near-unanimity term 
has a symmetric near-unanimity term.
 \end{problem}  

In this connection we just point out
that if $\mathcal V$ is a variety
(not necessarily locally finite)
with an $n$-ary near-unanimity term $u$ 
and with  a symmetric $n!$-ary
idempotent  term
$t$, then $\mathcal V$ has a symmetric
$n$-ary near-unanimity term $v$.
Just let 
$v(x_1, \dots , x_n)=t(\dots, u (x_{ \tau (1)}, \dots , x_{ \tau (n)}), \dots)$,
where $\tau$ varies among all permutations of
$\{ 1, \dots, n\}$ and  ``different  arguments of $t$
are filled using  different permutations''.

\emph{Acknowledgement.}
\red{We thank an anonymous referee for many useful suggestions
and for detecting a significant issue in a former version of the paper.

We thank K. Kearnes for many useful comments which
have been of great help in improving the manuscript.} 

\newcommand{\eeeemph}{{}}

\end{document}